\newcommand{\Dists}{\Psi}
\newcommand{\curlyleq}{\preccurlyeq}
\newcommand{\maxiter}{k_{\text{max}}}
\newcommand{\ubar}[1]{\underaccent{\bar}{#1}}
\newcommand{\utilde}[1]{\underaccent{\tilde}{#1}}
\newcommand{\real}{\mathbb{R}}
\newcommand{\realnonnegative}{\mathbb{R}_{\geq 0}}
\newcommand{\StabFun}{J}
\newcommand{\oprocendsymbol}{\hbox{$\bullet$}}
\newcommand{\oprocend}{\relax\ifmmode\else\unskip\hfill\fi\oprocendsymbol}
\newcommand{\proofend}{\relax\ifmmode\else\unskip\hfill\fi$\square$}
	\newtheorem{remark}{Remark}
	\newtheorem{defn}{Definition}
	\newtheorem{lem}{Lemma}
	\newtheorem{theorem}{Theorem}
	\newtheorem{prop}{Proposition}
\newenvironment{proof}{\noindent \textbf{Proof :}}{}
\newcommand{\Neighbors}{\mathcal{N}}
\newcommand{\Ito}{It\^{o} }
\newcommand{\Itos}{It\^{o}'s }
\newcommand{\deriv}{\mathrm{d}}
\newcommand{\ddtop}[1]{\frac{\deriv #1}{\deriv \mathrm{t}}}
\newcommand{\E}{\mathbbm{E}}
\newcommand{\dP}{\deriv \mathbbm{P}}
\newcommand{\dQ}{\deriv \mathbbm{Q}}
\newcommand{\dR}{\deriv \mathbbm{R}}
\newcommand{\dS}{\deriv \mathbbm{S}}
\newcommand{\dU}{\deriv \mathbbm{U}}
\newcommand{\dV}{\deriv \mathbbm{V}}
\newcommand{\Nodes}{\mathcal{V}}
\newcommand{\FrechetLower}{\mathcal{F}_{\ell}}
\newcommand{\FrechetUpper}{\mathcal{F}_{u}}
\newcommand{\Frechet}{Fr\'{e}chet }
\newcommand{\Boundupri}[1]{\bar{\mathcal{B}}_{i}^{#1}}
\newcommand{\Exp}[1]{e^{#1}}
\newcommand{\Cost}{\mathcal{C}}
\newcommand{\Graph}{\mathcal{G}}
\newcommand{\Edges}{\mathcal{E}}
\newcommand{\BetaIJmax}{\bar{\beta}_{ij}}
\newcommand{\GammaIJmax}{\bar{\gamma}_{ij}}
\newcommand{\NP}{\mathcal{NP}}
\newcommand{\CandidateSolution}{\mathbb{A}^\star}
\newcommand{\umpc}{u_{\text{mpc}}}
\newcommand{\uaux}{u_{\text{aux}}}
\newcommand{\me}{\mathrm{e}}
\newcommand{\telim}{\tau_{\text{elim}}}
\newcommand{\StateSpace}{\mathcal{X}}
\newcommand{\IdxSet}{\mathcal{I}}
\newcommand{\delt}{\Delta t}
\newcommand{\tone}{\tau_{1}}
\newcommand{\Zplus}{\mathbb{Z}_{\geq 0}}
\newcommand{\InfectExposed}{\ell}
\newcommand{\SamplingTimes}{T_{\delt}}
\newcommand{\Muller}{M{\"u}ller }
\newcommand{\Erdos}{Erd{\"o}s}
\newcommand{\ActionSet}{\mathcal{A}}
\newcommand{\action}{a}
\newcommand{\utotal}{u_{\text{tot}}}
\newcommand{\contratime}{t_{\text{c}}}
\newcommand{\precontratime}{t_{\text{c}}^{-}}
\newcommand{\MFpapers}{\cite{Preciado2014,Nowzari2017,Wang2014,Ogura2016,Lee2016a,Hota2017,Pare2017,Watkins2015,Eshghi2016,Eshghi2017}}
\tikzset{base/.style={draw, align=center, minimum height=4ex},
	test1/.style={base, diamond, aspect=2, text width=5em, inner sep=5pt},
	test2/.style={base, diamond, aspect=2, text width=5em, inner sep=-5pt}
}
\tikzstyle{block} = [draw, fill=blue!20, rectangle, minimum height=2em, minimum width=4em]
\tikzstyle{blkdiamond} = [draw, fill=blue!20, diamond]
\tikzstyle{blkcloud} = [draw, fill=blue!20, cloud, draw,cloud puffs=10,cloud puff arc=120, aspect=1.5, inner ysep=0.2em]
\tikzstyle{sum} = [draw, fill=blue!20, circle, node distance=1cm]
\tikzstyle{disk} = [draw, fill=blue!20, circle, node distance=1cm, minimum height=2em]
\tikzstyle{input} = [coordinate]
\tikzstyle{output} = [coordinate]
\tikzstyle{pinstyle} = [pin edge={to-,thin,black}]
\begin{document}
\title{\Huge Robust Economic Model Predictive Control\textit{} of \\ Continuous-time Epidemic Processes}      

\author{Nicholas J. Watkins, Cameron Nowzari, and George J. Pappas\thanks{N.J. Watkins and G.J. Pappas are with the Department of Electrical and Systems Engineering, University of Pennsylvania, Philadelphia, PA 19104, USA, {\tt\small \{nwatk,pappasg\}@upenn.edu}.  C. Nowzari is with the Department of Electrical and Computer Engineering, George Mason University, Fairfax, VA  22030, USA, {\tt\small cnowzari@gmu.edu}.	
}}

\maketitle
\begin{abstract}
	In this paper, we develop a robust economic model predictive controller for the containment of stochastic Susceptible-Exposed-Infected-Vigilant $(SEIV)$ epidemic processes which drives the process to extinction quickly, while minimizing the rate at which control resources are used.
	The work we present here is significant in that it addresses the problem of efficiently controlling general stochastic epidemic systems without relying on mean-field approximation, which is an important issue in the theory of stochastic epidemic processes.  This enables us to provide rigorous convergence guarantees on the stochastic epidemic model itself, improving over the mean-field type convergence results of most prior work.
	There are two primary technical difficulties addressed in treating this problem: (i) constructing a means of tractably approximating the evolution of the process, so that the designed approximation is robust to the modeling error introduced by the applied moment closure, and (ii) guaranteeing that the designed controller causes the closed-loop system to drive the $SEIV$ process to extinction quickly.  As an application, we use the developed framework for optimizing the use of quarantines in containing an $SEIV$ epidemic outbreak.
\end{abstract}
\begin{IEEEkeywords}
	Epidemic Processes, Model Predictive Control, Networked Systems.
\end{IEEEkeywords}

\section{Introduction}

The modern study of epidemic models has been intense for the past several decades, with work dating back to the $1970$s \cite{Bailey1975}, and a slew of recent results from the control community coming in recent years \cite{Preciado2014,Nowzari2017,Ogura2016,Wang2014,Lee2016a,Watkins2015,Eshghi2016,Eshghi2017,Hota2017,Pare2017,Drakopoulos2014,Scaman2016,Watkins2017}.  Potential applications include message passing in complex wireless networks \cite{Wang2014}, competition between multiple mimetic behaviors in social networks \cite{Wei2013}, and the spread of biological disease \cite{Keeling2005,Eksin2017,Brauer2006,Yan2007}.  A recent review of the control of epidemic processes can be found in \cite{Nowzari2016}.

An important problem in the field is understanding how to efficiently control epidemic processes in such a manner so as to be able to provide rigorous performance guarantees on the statistics of the process.  This problem arises from the complexity inherent in networked epidemic systems: the stochastic dynamics which describe the fundamental aspects of the process entangle the components of the system's state, making their analysis inherently difficult. For sufficiently simple epidemics, such as the Susceptible-Infected-Susceptible $(SIS)$ process, using a mean-field type moment closure, in which the second-order moments of the system are approximated by products of first-order moments, yields dynamics which provide an upper-bound for the expectation of the stochastic process \cite{Simon2017}.  In general, this is not the case.  Indeed, even for simple models with multiple compartments, simulations have shown standard mean-field approximations to be unreliable proxies for the statistics of the underlying stochastic process (see, e.g., \cite{Watkins2016}).  As such, it remains a scientifically interesting question to develop control techniques for stochastic epidemic networked processes for which we can make rigorous claims about the statistics of the process.  As the types of controls available to authorities for the prevention of disease are often costly, understanding well how to optimize control resource use while still guaranteeing that the epidemic will end quickly is of critical importance in mitigating the effect of future epidemic threats, such as the predicted increase in endemic diseases due to climate change \cite{Dasaklis2012}, or the emergence of drug-resistant superbugs \cite{Metcalf2015a,Arias2009}. 

There are two approaches considered in prior work concerning the control of  stochastic epidemic models without mean-field approximation.  In one approach \cite{Scaman2016, Drakopoulos2014}, authors study policies which vary the healing rates of the nodes in the graph according to some computed priority order, wherein infected nodes with higher priority are treated before others.  These works guarantee that if sufficient healing resources are available, such a policy can control the $SIS$ epidemic to the disease-free state quickly.  In the other approach \cite{Watkins2017}, a model predictive controller is developed which controls the statistics of a discrete-time $SIS$ epidemic directly by varying the processes' spreading parameters.  As model predictive control is an important paradigm in control theory with many successful applications in diverse fields (see, e.g. \cite{Allgower2000,Ellis2014,Mesbah2016,Borelli2017} and the references therein), and it is unclear if the priority-order strategies of \cite{Scaman2016,Drakopoulos2014} can be extended to more general settings efficiently, we work on generalizing the approach of \cite{Watkins2017} here to the setting of continuous-time epidemics.

\emph{The primary contribution of this paper} is the development of a robust economic model predictive control scheme for the stochastic continuous-time $SEIV$ process which allocates control resources to the spreading network so as to minimize the resource consumption rate realized by the controller, while providing a strong theoretical convergence guarantee that the disease will be eliminated from the network quickly.  This is the first paper to use model predictive control in the context of epidemic containment for continuous-time epidemic processes, and one of the first papers to develop feedback controls for continuous-time stochastic epidemic processes.  With respect to our earlier work \cite{Watkins2017}, the text presented here differs in that it considers the control of continuous-time epidemic processes, wherein propagating the uncertainty of the system is inherently difficult, and rigorous arguments concerning the convergence of the closed-loop system are more difficult to develop.  The two key technical difficulties encountered in addressing this task are (i) providing a rigorous moment-closure approximation of the $SEIV$ process which provides bounds of its statistics, and (ii) providing a convergence analysis of the process when evolving under the designed model predictive controller which guarantees that the amount of time that passes until the process is disease-free is small.  

It is important to underline that we believe the closure techniques presented here will generalize to further epidemic models readily, as will the convergence arguments used.  We expect that the framework presented here will provide a fertile avenue for future research, providing a framework in which problems studied in prior works considering only the mean-field regime (such as \MFpapers) can be readily extended.

\paragraph*{\textbf{Organization of Remainder}}
The remainder of the paper is organized as follows.  In Section \ref{sec:problem}, we detail the $SEIV$ epidemic model, the control architecture, and the problem statement.  In Section \ref{sec:robust}, we develop a moment closure for the $SEIV$ epidemic model which provides the robust approximation guarantees which are required for the implementation of the desired control scheme.  In Section \ref{sec:control}, we perform a convergence analysis of the developed controller.  In Section \ref{sec:application}, we apply the developed controller to a problem of minimizing the number of nodes put into quarantine in order to guarantee a specified exponential decay of the number of exposed and infected nodes in the system.  In Section \ref{sec:summary}, we summarize the main results of the paper, and comment on avenues for future work. \oprocend
	
\paragraph*{\textbf{Notation and Terminology}}
We use $\realnonnegative$ to denote the set of non-negative real numbers, and $\Zplus$ denote the set of non-negative integers.  We denote by $[k]$ the set of the first $k$ positive integers, i.e. $[k] \triangleq \{1,2, \dots, k\},$ and by $[k]_0$ the first $k+1$ natural numbers, i.e. $[k]_0 \triangleq \{0,1,2,\dots,k\}.$  We let $e_i$ denote the $i$'th column of an identity matrix, with the appropriate dimension inferred from context.

We denote by $\E[X]$ the expectation of a random variable $X.$  Note that when the measure of the expectation is clear from context, we omit it.  When necessary, we explicitly include it as a subscript of the operator, i.e. $\E_{\mu}[X]$ is the expectation of $X$ with respect to the measure $\mu.$  When clear from context, we omit the initial condition $X(0)$ of a stochastic process.  \oprocend

\section{Model and Problem Statement}
\label{sec:problem}

In this section, we formally develop the model and the problem we study in this paper.  The particular construction of the $SEIV$ process we present here is our own, however the final system we arrive at is the same $SEIV$ model as studied in prior work (see, e.g., \cite{Nowzari2017,Hikal2014}).  Note that while epidemic models are inherently general mathematical abstractions which may be studied in a variety of contexts, the language we use throughout the paper is made specific to the context of biological epidemics for simplicity.

\subsection{Susceptible-Exposed-Infected-Vigilant Model} 
\label{subsec:SEIV}
We consider the dynamics of a general Susceptible-Exposed-Infected-Vigilant ($SEIV$) epidemic model.  In this model, each agent in a population is represented in a directed $n$-node spreading graph $\Graph = (\Nodes, \Edges)$ by a particular node $i \in \Nodes.$  At each time in the process, every node belongs to one of a set of the model's \emph{compartments}.  On an intuitive level, each compartment represents the stage of infection in which the agent currently resides.  In the $SEIV$ model, there are four types of model compartments: susceptible (denoted by the symbol $S$), exposed (denoted by the symbol $E$), infected (denoted by the symbol $I$), and vigilant (denoted by the symbol $V$).  When an agent is susceptible, we may think of it as healthy.  When an agent is exposed, we may think of it as having recently come into contact with a contagious disease, but without having yet outwardly displayed symptoms.   When an agent is infected, we may think of it as symptomatic.  When an agent is vigilant, we may think of it as actively protecting itself against exposure to the disease.

We denote by $X(t)$ a stochastic vector containing the compartmental memberships of each node at time $t.$  To make the notation as intuitive as possible, we index $X(t)$ in two dimensions: one which indicates the compartment which is being described, and the other the numerical label of the node.  As such, we denote by $X_i^C(t)$ an indicator random variable, taking the value $1$ if node $i$ is in compartment $C,$ where $C$ is one of the symbols $C \in \mathcal{L} \triangleq \{S,E,I,V\},$ and $0$ otherwise.  In this way, we see that for all times $t,$ we have that $\sum_{C \in \mathcal{L}} X_i^C(t) = 1$ for all $i,$ as each node belongs to precisely one compartment at all times.  We denote by $\StateSpace$ the set of all possible states of the $SEIV$ process.

\begin{figure}
	\centering
	\begin{tikzpicture}
	\node[circle,very thick,draw=black] (S) at (2,0.5) {$S$};
	\node[circle,very thick,draw=black] (E) at (4,0.5) {$E$};
	\node[circle,very thick,draw=black] (I) at (4,-0.5) {$I$};
	\node[circle,very thick,draw=black] (V) at (2,-0.5) {$V$};
	
	\path [->,very thick,dashed] (S) edge [bend left] node[xshift=0.0cm,yshift=0.25cm] {$\dQ_{4j} + \dR_{4j}$} (E);
	\path [->,very thick] (E) edge [bend left] node[xshift=0.35cm,yshift=0.0cm] {$\dS_4$} (I);
	\path[->,very thick] (I) edge [bend left] node[xshift=0.05cm, yshift=-0.25cm] {$\dU_4$} (V);
	\path[->,very thick] (V) edge [bend left] node[xshift=-0.35cm, yshift=0.0cm] {$\dP_4$} (S);
	\path[->,very thick] (S) edge [bend left] node[xshift=0.40cm, yshift=0.0cm] {$\dV_4$} (V);
	
	\node[circle,very thick,draw=black] (1) at (-2,0.5) {1};
	\node[circle,very thick,draw=black] (2) at (-1.4,-0.5) {2};
	\node[circle,very thick,draw=black] (6) at (-1,0.5) {3};
	\node[circle,very thick,draw=black] (7) at (0,0.5) {4};
	
	\path [->,very thick,color=black] (2) edge (1);
	\path [->,very thick,color=black] (1) edge (2);
	
	\path [->,very thick,color=black] (6) edge (1);
	\path [->,very thick,color=black] (7) edge (6);
	\path [->,very thick,color=black] (6) edge (7);
	\path [->,very thick,color=black] (6) edge (2);
	
	\node[] (anch1) at (1.7,1) {};
	\node[] (anch2) at (1.7,-.8) {};
	\node[] (ell1) at (1.2,0.5) {};
	\node[] (ell2) at (1.2,-0.5) {};
	
	\begin{pgfonlayer}{bg}
	\node[draw=black,fit={(S) (E) (I) (V) (anch1) (anch2)},thick ,inner sep=0.1ex,ellipse,fill=white] (I2circ) {};
	\node[circle,very thick,draw=black,opacity=0.45,fill=red] (S) at (2,0.5) {$S$};
	\node[circle,very thick,draw=black,opacity=0.45,fill=green] (E) at (4,0.5) {$E$};
	\node[circle,very thick,draw=black,opacity=0.45,fill=blue] (I) at (4,-0.5) {$I$};
	\node[circle,very thick,draw=black,opacity=0.45,fill=magenta] (V) at (2,-0.5) {$V$};
	
	\node[circle,very thick,draw=black,opacity=0.45,fill=red] (1) at (-2,0.5) {1};
	\node[circle,very thick,draw=black,opacity=0.45,fill=blue] (2) at (-1.4,-0.5) {2};
	\node[circle,very thick,draw=black,opacity=0.45,fill=magenta] (6) at (-1,0.5) {3};
	\node[circle,very thick,draw=black,opacity=0.45,fill=green] (7) at (0,0.5) {4};
	\end{pgfonlayer}
	\begin{pgfonlayer}{bbg}
	\path [very thick,] (7) edge (ell1);
	\path [very thick] (7) edge (ell2);
	\end{pgfonlayer}
	\end{tikzpicture}
	\caption{A compartmental diagram of the $SEIV$ process.  The transition process for node $4$ is explicitly illustrated, where the measures of the contact processes are included to indicate which process determines which transition, as described in Section \ref{subsec:SEIV}.}
	\label{fig:SEIV}
\end{figure}
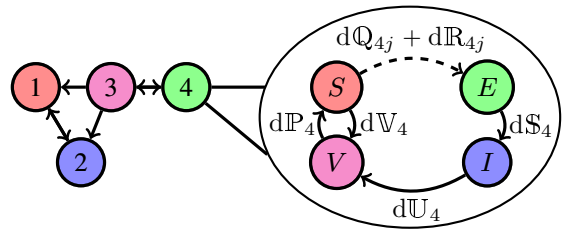
Figure \ref{fig:SEIV} provides an illustration of the $SEIV$ process, which can be posed as a system of \Ito integrals taken with respect to measures of independent Poisson processes as
\begin{equation}
\label{eq:SEIV_process}
\begin{aligned}
	& \deriv X_i^{S} = X_i^V \dP_i  - X_i^S \dV_i \\
	& \hspace{25 pt}  - \sum_{j \in \Neighbors_i}( X_i^S X_j^E \dQ_{ij} + X_i^S X_j^I \dR_{ij}),\\
	&\deriv X_i^{E} = \sum_{j \in \Neighbors_i} (X_i^S X_j^E \dQ_{ij} + X_i^S X_j^I \dR_{ij})  - X_i^E \dS_i,\\
	&\deriv X_i^{I} = X_i^E \dS_i - X_i^I \dU_i,\\
	&\deriv X_i^{V} = X_i^I \dU_i + X_i^S \dV_i - X_i^{V} \dP_i,
\end{aligned}
\end{equation}
where $\Neighbors_i$ is taken to be the set of in-neighbors of node $i$ on $\Graph,$ $\dP_i$ is the probability measure induced by the Poisson process generating transition events which take node $i$ from vigilant to susceptible, $\dQ_{ij}$ is the probability measure induced by the Poisson process generating transition events which take node $i$ from susceptible to exposed through contact with node $j$ when $j$ is exposed, $\dR_{ij}$ is the probability measure induced by the Poisson process generating transition events which take node $i$ from susceptible to exposed through contact with node $j$ when $j$ is infected, $\dS_{i}$ is the probability measure induced by the Poisson process which generates transition events which take node $i$ from exposed to infected, $\dU_{i}$ is the probability measure induced by the Poisson process generating transition events which transition node $i$ from infected to vigilant, and $\dV_i$ is the probability measure associated to the Poisson process which generates transitions from susceptible to vigilant.

We wish to control the process detailed by \eqref{eq:SEIV_process} through the dynamics of its expectation.  As such, we must formally derive the expectation dynamics of \eqref{eq:SEIV_process}, which after some technical arguments (see Appendix \ref{app:dynamics}) can be shown to be
\begin{equation}
	\label{eq:SEIV_diff}
	\begin{aligned}
	&\ddtop{\E[X_i^{S}]} = \alpha_i \E[X_i^V]  -  \xi_i\E[X_i^{S}]\\
	&\hspace{40 pt} - \sum_{j \in \Neighbors_i}(\beta_{ij} \E[X_i^S X_j^E] + \gamma_{ij} \E[X_i^S X_j^I]),\\
	&\ddtop{\E[X_i^{E}]} = \sum_{j \in \Neighbors_i} (\beta_{ij} \E[X_i^S X_j^E] + \gamma_{ij} \E[X_i^S X_j^I]) \\
	&\hspace{40 pt}  - \delta_{i} \E[X_i^E],\\
	&\ddtop{\E[X_i^{I}]} =  \delta_{i} \E[X_i^E] - \eta_i \E[X_i^I],\\
	&\ddtop{\E[X_i^{V}]} =  \eta_{i} \E[X_i^I] + \xi_i \E[X_i^{S}] - \alpha_i \E[X_i^{V}],
	\end{aligned}
\end{equation}
where $\alpha_i,$ $\beta_{ij},$ $\gamma_{ij},$ $\delta_{i},$ $\eta_i,$ and $\xi_i,$ are the rates of the processes associated to $\dP_i,$ $\dQ_{ij},$ $\dR_{ij},$ $\dS_i,$ $\dU_{i},$ and $\dV_i$ respectively, and are termed the \emph{spreading parameters}.  

The dynamics \eqref{eq:SEIV_diff} are a central object of study in this paper.  In particular, a major difficulty in typical epidemic control problems is in reconciling the fact that the induced first-moment dynamics \eqref{eq:SEIV_diff} are not closed, as they rely explicitly on the second-order moments $\E[X_i^S X_j^E]$ and $\E[X_i^S X_j^I],$ for which we do not have explicit dynamics.  Moreover, it can be shown that the dynamics of the second-order moments depend on third-order moments, and so on until the dynamics entail the expectation of all $4^n$ possible combinations of node-compartment states (see, e.g. the construction in \cite[Section 5]{Sahneh2013}).  Because of this, there is no known technique for propagating the exact expectations of the $SEIV$ process forward in time tractably, and techniques for closing \eqref{eq:SEIV_diff} by approximating the second-order moment terms - called \emph{moment closures} - are typically employed.  One contribution of this work is a novel moment closure which provides over- and under- approximations of compartmental membership probabilities for each node and every compartment, presented in Section \ref{sec:robust}.

\subsection{Control Architecture}
\label{subsec:control_arch}

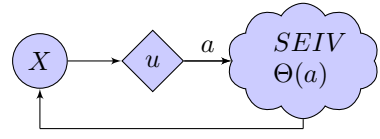
\begin{figure}
	\centering	
	\begin{tikzpicture}[auto, node distance=2cm,>=latex']
	\node [disk] (state) {$X$};
	\node [blkdiamond, right of=state, node distance= 1.5cm] (control) {$u$};
	\node [blkcloud, right of=control, node distance = 2.0cm, text width = 0.8cm] (network) {$SEIV$ $\Theta(\action)$};
	\coordinate [below of=network, node distance = 0.9cm] (blwnet) {};
	\coordinate [below of=state, node distance= 0.9cm] (blwstate) {};
	\draw [draw,->] (state) -- (control);
	\draw [draw,->] (control) -- (network);
	
	\path[->] (control) edge node {$\action$} (network);
	
	\draw [<-] (state) |- (blwstate);
	\draw [-] (network) |- (blwnet);
	\draw [-] (blwnet) -- (blwstate);
	\end{tikzpicture}
	\caption{A diagram of the control architecture studied in this paper.  The state $X$ of the process is observed by a controller $u,$ which then applies an action $a$ to the $SEIV$ process, which induces the set of spreading parameters $\Theta(\action)$ used to propagate the process forward.} \label{fig:sys_arch}
\end{figure}

We control the spreading network of the $SEIV$ process by way of applying control actions to the spreading graph of the process.  That is, our model assumes the existence of a set of control actions $\ActionSet,$ about which we assume that each action $\action$ fully determines the set of spreading parameters through a known map 
\begin{equation*}
	\Theta(\action) = \{\alpha(\action),\beta(\action),\gamma(\action),\delta(\action),\eta(\action),\xi(\action) \},
\end{equation*} 
and the cost of applying action $\action$ is given by $\Cost(\action),$ where $\Cost$ is a known function.  We study the case in which actions are applied to the network on some predefined set of times $\SamplingTimes \triangleq \{t \in \realnonnegative \, | \, t = \delt k, k \in \Zplus\}.$  The particular action applied is computed by the controller $u$ after observing the state of the system $X,$ where the computation performed is done so as to minimize the economic cost realized by the controller, while still providing some stability guarantee (see Section \ref{subsec:empc}).  Thus, our controller functions as a sampled-data model predictive controller for the continuous-time $SEIV$ process.  A diagram presenting this architecture is given in Figure \ref{fig:sys_arch}. 

Whenever the controller decides to change the applied action to $\action,$ the parameters of the process dynamics \eqref{eq:SEIV_diff} change to $\alpha_{i}(\action),$ $\beta_{ij}(\action),$ $\gamma_{ij}(\action),$ $\delta_{i}(\action),$ $\eta_i(\action),$ and $\xi_i(\action),$ respectively, where each value is assumed to be finite, is implicitly defined by $\Theta,$ and remains constant between update times.  Note that in the remainder of the paper, we will think of the application of a control action as inducing a probability measure which propagates the process forward; for convenience, we notate this measure $\Theta(\action),$ as it should cause no confusion.

In an application, the set $\ActionSet$ should be chosen so as to represent the types of actions a planner can take in order to effect the evolution of the epidemic, e.g. distributing medication, investing in awareness advertisement, or assigning people to quarantines.  The parameter map $\Theta$ should be defined to take each possible control action, and produce the particular set of spreading parameters induced by the action.  One should expect that giving a person medication should set her healing rate $\delta_i$ to a high value, whereas investing in advertisement will set the rate at which a node transitions to the vigilant state (i.e. $\xi_i$) to a high value, and quarantining a node will result in deleting some connections from the graph (i.e. setting $\beta_{ij}$ and $\gamma_{ij}$ to $0$) due to persons who ordinarily come into contact with each other not doing so any longer.
 
We assume that $\ActionSet$ is discrete and has at most finitely many elements, and that $\Cost(a)$ is finite for all $a \in \ActionSet.$  Note that these assumptions do not play a major role in the text - they do not influence our main results (Theorem \ref{thm:opt_apx} and \ref{thm:upper_bound}) whatsoever.  They make the problem setting more concrete, and allow us to define an appropriate optimization method easily, when examining the example application provided in Section \ref{sec:application}. 
	
Note that there are many works which have studied model predictive control with discrete control actions which precede this text \cite{Geyer2014,Preindl2013,Karamanakos2014,Richards2005,Rawlings2017a,Rawlings2018}.  As such, this feature should not be considered a primary contribution of the work.  In particular, the presence of discrete inputs does not materially change stability analysis significantly in many circumstances, as pointed out in \cite{Rawlings2017a}.  We believe extending the results in this paper to different action spaces to be straightforward.

\subsection{Economic Model Predictive Control}
\label{subsec:empc}

In contrast to traditional forms of model predictive control, economic model predictive control (EMPC) focuses on developing methods which optimize the economic performance of a system, while still guaranteeing underlying system properties such as stability (see \cite{Ellis2014,Ellis2016,Angeli2011,Amrit2011,Subramanian2014,Amrit2013,Angeli2012,Grune2017, Faulwasser2018, Rawlings2017b, Rawlings2018} for relevant background).  The controller we study in this paper is an economic model predictive controller which applies feasible, but possibly suboptimal, solutions to the optimization problem
\begin{equation}
	\label{prog:empc_def}
 	\underset{\action \in \ActionSet}{\text{minimize}} \{ \Cost(\action) \, | \,   \StabFun (X(t),\action) \leq 0\},
\end{equation}
at all times $t$ in the set of sampling times $\SamplingTimes \triangleq \{t \in \realnonnegative \, | \, t = \delt k, k \in \Zplus\}.$  The role of \eqref{prog:empc_def} in the closed-loop evolution of the controlled $SEIV$ process is in automatically generating actions from an implicit, nonlinear control law $\umpc$ which limits the rate at which economic costs are incurred by the applied actions while still guaranteeing stability.

Our role in designing the EMPC is in crafting an optimization problem of the form \eqref{prog:empc_def} such that applying actions which are feasible to \eqref{prog:empc_def} at all times $t$ in $\SamplingTimes,$ the closed-loop behavior of \eqref{eq:SEIV_diff} will drive the disease out of the network quickly - say, in an expected amount of time which grows linearly with the size of the initial infection.  To this end, we focus on the case in which the \emph{stability constraint function} $\StabFun (X(t), \action)$ takes the form
\begin{equation}
	\label{eq:Sdef}
	\StabFun(X(t),\action) \triangleq \E_{\Theta(\action)}[\ell(X(t+\delt)) - \ell(X(t)) \Exp{-r \delt} | X(t)],
\end{equation}
where $\ell$ is defined as the total number of exposed and infected nodes in the network, i.e. $\ell(X) \triangleq \sum_{i \in \Nodes} X_i^E + X_i^I,$ and $r$ is a chosen positive constant describing the desired decay rate of the closed-loop system.  The constraint $\StabFun (X(t),\action) \leq 0,$ which we refer to as the \emph{stability constraint}, is a common feature of nonlinear and economic model predictive controllers (see \cite[Section 3]{Mayne2000} and \cite[Section 3.3]{Ellis2014}, respectively), though the choice of stability constraint function varies depending on the context.  Ensuring that the stability constraint is satisfied whenever an action is updated plays a central role in our control by guaranteeing an appropriate notion of decay. 

While in full generality, it may be difficult to find a control action which is feasible to \eqref{prog:empc_def}, we assume that the controller has access to a stabilizing auxiliary control law $\uaux$ which provides such an action at any state.  Note that this assumption is common in the nonlinear and economic model predictive control literature (see \cite[Section 3]{Mayne2000} and \cite[Section 3.3]{Ellis2014}, respectively), due to the abject difficulty of solving \eqref{prog:empc_def} to global optimality caused by the nonconvexity induced by the nonlinearity of the dynamics.  Moreover, in the context of epidemic containment, we expect finding such controllers to be easy: if we are distributing medication, we may give medication to everyone, if we are distributing protective clothing, we may give protective clothing to everyone, if we are deciding who to quarantine, we may quarantine everyone.  In all such cases, if we do so, the epidemic will die out quickly.  This is to say that in the context of epidemic containment, we expect that the interesting problem is not generally in finding an abstract control law which will eliminate an epidemic quickly, it is in determining how to apply control actions efficiently, so as to use a nontrivial amount of economic resources while \emph{still} ensuring fast disease elimination.

\subsection{Problem Statement}
\label{subsec:problem_statement}

We address the problem of constructing a robust EMPC for the $SEIV$ process which guarantees that the epidemic attains membership in the set of disease free states quickly, while limiting the rate of resource consumption incurred by the actions applied by the controller.  Our work here is focused on addressing two key difficulties which arise from applying economic model predictive control to epidemic containment: (i) approximating the evolution of \eqref{eq:SEIV_diff} so as to have a robust approximation of the evolution of the processes' compartmental membership probabilities under a fixed control action $\action,$ and (ii) analyzing the convergence of the closed-loop process so as to provide a rigorous guarantee that the disease will be eliminated from the network quickly.

The difficulty of item (i) arises from the fact that in order to induce the desired decay property, the function $\StabFun (X(t),\action)$ must include information about the future expected state of the process, conditioned on the current state of the process and the action applied, i.e. a term of the form $\E_{\Theta(\action)}[\InfectExposed(X (t+ \delt))| X(t)],$ where $\InfectExposed$ is an appropriately chosen function.  Approximating such a function in a manner that guarantees that \eqref{prog:empc_def} is feasible requires that we construct a moment closure that can provide a rigorous upper-bound on $\E_{\Theta(\action)}[\InfectExposed(X (t+ \delt)) | X(t)],$ which is not a property guaranteed by any prior moment closure techniques.  We construct a novel moment closure with this property in Section \ref{sec:robust}.

The difficulty of item (ii) arises from the fact that the stability results which are typical to MPC literature prove asymptotic convergence to a connected, compact set which contains the origin (see, e.g. \cite{Allgower2000,Ellis2014,Mesbah2016,Borelli2017} and references therein).  Such a result is not useful in our context.  In general, converging to a set even at an exponential rate does \emph{not} guarantee that a given process \emph{ever} enters the set.  As the $SEIV$ process attains membership in the disease-free set in finite time almost surely under all but the most pathological control laws, it is primarily of interest to study how quickly it enters the set of disease free states, which motivates the novel convergence analysis presented in Section \ref{sec:control}.

\section{Robust Moment Closure for $SEIV$} \label{sec:robust}

In this section, we construct a robust moment closure approximation for the $SEIV$ process.  Essentially, this is a method for computing an outer approximation to the set of solutions of \eqref{eq:SEIV_diff}, which we cannot solve directly due to the second-order moments $\E[X_i^{C} X_j^{C^\prime}]$ not having known, analytic expressions.  The method we develop enables rigorously approximating conditional expectations of the form $\E_{\Theta(\action)}[\InfectExposed(X(t + \delt))|X(t)],$ which are necessary in order to enact the EMPC scheme detailed in Section \ref{subsec:empc}, as they are required to evaluate the feasibility of \eqref{prog:empc_def}.  Note that here, we only explicitly deal with approximating $\E_{\Theta(\action)}[\InfectExposed(X(t + \delt))|X(t)]$ as defined in Section \ref{subsec:empc}.  This is for brevity, as the techniques can be extended to other choices with more complicated analysis. 

The moment closure technique in this section is novel, and has the property that the compartmental membership probabilities of the system are bounded by known quantities for all time, in contrast to prior work.  In particular, most prior works in epidemic control literature use a mean-field type moment closure (see, e.g. \cite{Sahneh2013}), which replaces terms of the form $\E[X_i^C X_j^{C^\prime}]$ with the products $\E[X_i^C] \E[ X_j^{C^\prime}].$  While such an approximation may work well for sufficiently simple epidemic models \cite{Simon2017}, in general this approximation gives no rigorous accuracy guarantee, and there are known systems for which such moment closures result in poor approximations of the statistics of the underlying system \cite{Watkins2016,Cator2012}.  Note that nothing formal is known about the quality of mean-field approximations for the $SEIV$ process studied here. 

One may imagine that applying more sophisticated types of known moment closure techniques to epidemic processes, such as variants of derivative-matching methods \cite{Singh2006,Singh2011,Ghusinga2017,Soltani2014}, may avoid this issue.  However, this is not the case.  Such techniques give only weak guarantees of accuracy with respect to propagating uncertainty forward in time.  In particular, the dynamics resulting from such closures are guaranteed to be close to the true dynamics in a neighborhood of the initial condition used to generate the approximation.  Such a guarantee is not appropriate for our application, as we must be able to guarantee that our approximation of $\E_{\Theta(\action)}[\InfectExposed(X(t + \delt))|X(t)]$ is an upper-bound of its true value in order to asses the feasibility of \eqref{prog:empc_def} to guarantee stability.

\subsection{\Frechet Moment Closure of Moment Dynamics}
\label{subsec:frechet_apx}
We begin our formal developments by constructing a na{\"i}ve robust moment closure for the $SEIV$ expectation dynamics \eqref{eq:SEIV_diff}.  To present the developed system in a concise manner, we first introduce notation for the operators which characterize \Frechet inequalities.  These play a central role in our results.

\begin{lem}[\Frechet Inequalities \cite{Ruschendorf1991}]
	\label{lem:frechet}
	Define the operators $\FrechetLower(y,z) \triangleq \max\{0, y + z - 1\},$ and $\FrechetUpper(y,z) \triangleq \min\{y,z\}.$ 
	Let $\Pr$ be a probability measure on some event space $\Omega,$ and let $A$ and $B$ be events defined on $\Omega.$  Then, it holds that
	\begin{equation}
		\label{ineq:frechet}
		\FrechetLower(\Pr(A),\Pr(B)) \leq \Pr(A,B) \leq \FrechetUpper(\Pr(A),\Pr(B)),
	\end{equation}
	where the notation $\Pr(A,B)$ denotes the joint probability, i.e. the probability that the event $\{ A \cap B \}$ occurs.
\end{lem}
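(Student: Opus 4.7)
The plan is to prove the two bounds separately, since each side of \eqref{ineq:frechet} is an extremum of two terms, and in each case I only need to verify two simpler inequalities that then combine by taking a $\max$ or $\min$. All the needed ingredients are elementary properties of the probability measure $\Pr$: monotonicity under set inclusion, nonnegativity, and additivity (from which inclusion-exclusion follows).

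For the upper bound, I would begin by observing the set inclusions $A \cap B \subseteq A$ and $A \cap B \subseteq B$. Applying monotonicity of $\Pr$ to each containment yields $\Pr(A \cap B) \leq \Pr(A)$ and $\Pr(A \cap B) \leq \Pr(B)$ simultaneously, so $\Pr(A \cap B) \leq \min\{\Pr(A), \Pr(B)\} = \FrechetUpper(\Pr(A), \Pr(B))$, which is the right-hand inequality in \eqref{ineq:frechet}.

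For the lower bound, I would establish two inequalities and then take their $\max$. The first, $\Pr(A \cap B) \geq 0$, is immediate from nonnegativity. For the second, I would invoke the inclusion-exclusion identity $\Pr(A \cup B) = \Pr(A) + \Pr(B) - \Pr(A \cap B)$, and then use $\Pr(A \cup B) \leq 1$ to rearrange to $\Pr(A \cap B) \geq \Pr(A) + \Pr(B) - 1$. Combining these two lower bounds gives $\Pr(A \cap B) \geq \max\{0, \Pr(A) + \Pr(B) - 1\} = \FrechetLower(\Pr(A), \Pr(B))$, which is the left-hand inequality in \eqref{ineq:frechet}.

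Honestly, there is no substantive obstacle here: the result is a classical one and the proof amounts to a few lines of measure-theoretic bookkeeping once the two sides are decomposed into their constituent inequalities. The genuine difficulty in the surrounding development lies not in proving this lemma, but in leveraging it judiciously to close the moment dynamics \eqref{eq:SEIV_diff} in a way that produces rigorous, simultaneously valid upper and lower envelopes on the compartmental membership probabilities under the controlled process.
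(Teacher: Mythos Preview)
Your proof is correct and entirely standard. The paper does not actually prove this lemma at all---it is stated with a citation to \cite{Ruschendorf1991} and used as a black box---so there is no ``paper's own proof'' to compare against; your elementary argument via monotonicity, nonnegativity, and inclusion--exclusion is exactly how one would establish it, and your closing remark that the real work lies in exploiting the bounds for the moment closure is on point.
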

The role of the \Frechet inequalities in our development is that of bounding error in the dynamics due to the entanglement caused by the appearance of cross-product terms in \eqref{eq:SEIV_diff}.  Note that the \Frechet inequalities are the tightest inequalities estimating joint probabilities which are functions of only marginal probabilities, and are distribution-free \cite{Ruschendorf1991}.  Note also that our indicator random variables $X_i^C$ are Bernoulli random variables, and so the expectation of their product $\E[X_i^{C} X_j^{C^\prime}]$ can be written as the joint probability $\Pr(X_i^C = 1, X_j^{C^\prime} = 1),$ making the \Frechet inequalities an appropriate tool for our later analysis. As we cannot analytically propagate the distribution of \eqref{eq:SEIV_process} forward in time, and our dynamics \eqref{eq:SEIV_diff} only explicitly give us information about marginal probabilities, this is the best we can get for any given collection of marginals $\{\E[X_i^C]\}_{i \in \Nodes, C \in \mathcal{L}}.$

To arrive at a crude approximating system, we use the \Frechet bounds to over-approximate the dynamics on the states we designate as upper bounds, and under-approximate the dynamics on the states we designate as lower bounds.  The resulting approximating dynamics, along with their approximation guarantee is given in the following result.

\begin{theorem}[\Frechet Moment Closure for $SEIV$] \label{thm:frechet_apx}
	Let $\utilde{x}(0) = X(0) = \tilde{x}(0),$ and consider the solutions of the system of nonlinear ordinary differential equations
	\begin{equation}
		\label{eq:SEIV_frechet_apx_dyn}
		\begin{aligned}
		\dot{\tilde{x}}_i^S =& \alpha_i \tilde{x}_i^V - \xi_i \tilde{x}_i^S - \sum_{j \in \Neighbors_i} \beta_{ij} \FrechetLower(\tilde{x}_i^S,\utilde{x}_j^E) - \gamma_{ij} \FrechetLower(\tilde{x}_i^S, \utilde{x}_j^I),\\
		\dot{\utilde{x}}_i^S =& \alpha_i \utilde{x}_i^V - \xi_i \utilde{x}_i^S - \sum_{j \in \Neighbors_i} \beta_{ij} \FrechetUpper(\utilde{x}_i^S,\tilde{x}_j^E) -  \gamma_{ij} \FrechetUpper(\utilde{x}_i^S, \tilde{x}_j^I),\\
		\dot{\tilde{x}}_i^E =& \sum_{j \in \Neighbors_i}  \beta_{ij} \FrechetUpper(\tilde{x}_i^S,\tilde{x}_j^E) + \gamma_{ij} \FrechetUpper(\tilde{x}_i^S, \tilde{x}_j^I) - \delta_i\tilde{x}_i^E,\\
		\dot{\utilde{x}}_i^E =& \sum_{j \in \Neighbors_i} \beta_{ij} \FrechetLower(\utilde{x}_i^S,\utilde{x}_j^E) + \gamma_{ij} \FrechetLower(\utilde{x}_i^S, \utilde{x}_j^I) - \delta_i \utilde{x}_i^E,\\
		\dot{\tilde{x}}_i^I =&\delta_i \tilde{x}_i^E - \eta_i \tilde{x}_i^I,\\
		\dot{\utilde{x}}_i^I =&\delta_i\utilde{x}_i^E - \eta_i \utilde{x}_i^I,\\
		\dot{\tilde{x}}_i^V =& \eta_i \tilde{x}_i^I + \xi_i \tilde{x}_i^S - \alpha_i \tilde{x}_i^V,\\
		\dot{\utilde{x}}_i^V =& \eta_i \utilde{x}_i^I + \xi_i \utilde{x}_i^S - \alpha_i \utilde{x}_i^V.
		\end{aligned}
	\end{equation}
	 Then, for every compartment $C \in \mathcal{L}$ and each node $i \in \Nodes,$ the inclusion
		$\E[X_i^C(t) | X(0)] \in \left[\utilde{x}_i^C(t|0), \tilde{x}_i^C(t|0) \right],$
	holds for all $t \geq 0,$ where $\utilde{x}_i^C(t|0)$ and $\tilde{x}_i^C(t|0)$ are used to denote the under- and over-approximation of $\E[X_i^C(t)|X(0)]$ with respect to the dynamics \eqref{eq:SEIV_frechet_apx_dyn}, respectively.
\end{theorem}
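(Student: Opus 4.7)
The plan is to establish the two-sided bound by a standard comparison-principle argument for ODE systems, using Lemma 1 to control the cross-product expectations $\E[X_i^S X_j^E]$ and $\E[X_i^S X_j^I]$ that the true dynamics \eqref{eq:SEIV_diff} depend on but that are not available to the closed system \eqref{eq:SEIV_frechet_apx_dyn}. Define the $8n$ error quantities $e_i^C(t) \triangleq \tilde{x}_i^C(t) - \E[X_i^C(t)]$ and $\underline{e}_i^C(t) \triangleq \E[X_i^C(t)] - \utilde{x}_i^C(t)$ for each $i \in \Nodes$ and $C \in \mathcal{L}$; the goal is to show all of them remain nonnegative for $t \geq 0$. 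At $t = 0$ they all vanish by hypothesis, so it suffices to rule out a first time $t^\star > 0$ at which some error would become negative.

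The preparatory step is to observe that both Fr\'echet operators are componentwise nondecreasing: $\FrechetUpper(y,z) = \min\{y,z\}$ is obviously monotone, and $\FrechetLower(y,z) = \max\{0, y + z - 1\}$ is monotone as well. Combined with Lemma 1, this gives the chain
\begin{equation*}
\FrechetLower(\utilde{x}_i^S, \utilde{x}_j^E) \leq \FrechetLower(\E[X_i^S], \E[X_j^E]) \leq \E[X_i^S X_j^E] \leq \FrechetUpper(\E[X_i^S], \E[X_j^E]) \leq \FrechetUpper(\tilde{x}_i^S, \tilde{x}_j^E),
\end{equation*}
whenever the bracketing inclusion has held up to the current instant, and likewise for the $X_j^I$ terms. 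This is the crux of the argument: the choice of $\FrechetUpper$ in the $\dot{\tilde{x}}_i^E$ and $\dot{\utilde{x}}_i^S$ equations, and of $\FrechetLower$ in the $\dot{\tilde{x}}_i^S$ and $\dot{\utilde{x}}_i^E$ equations, is precisely what is needed to make the nonlinear terms lie on the correct side of the corresponding true dynamics.

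The main step is the first-crossing argument. Suppose for contradiction that $t^\star = \inf\{t > 0 : \min_{i,C}\{e_i^C(t), \underline{e}_i^C(t)\} < 0\}$ is finite. By continuity, at $t^\star$ all errors are still nonnegative, and some error, say $e_i^E$ (the other seven cases being analogous), equals zero with nonpositive right-derivative. Using \eqref{eq:SEIV_diff} and the bounds above, together with $\E[X_i^E(t^\star)] = \tilde{x}_i^E(t^\star)$ and $\E[X_j^E(t^\star)] \leq \tilde{x}_j^E(t^\star)$, $\E[X_j^I(t^\star)] \leq \tilde{x}_j^I(t^\star)$, $\E[X_i^S(t^\star)] \leq \tilde{x}_i^S(t^\star)$, monotonicity of $\FrechetUpper$ yields $\ddtop{\E[X_i^E]}(t^\star) \leq \dot{\tilde{x}}_i^E(t^\star)$, hence $\dot{e}_i^E(t^\star) \geq 0$, contradicting strict crossing. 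The analogous bookkeeping for $\tilde{x}_i^S$ uses that $-\FrechetLower$ is nonincreasing in the $X_j^E, X_j^I$ arguments, so replacing $\E[X_j^E], \E[X_j^I]$ by the (smaller) $\utilde{x}_j^E, \utilde{x}_j^I$ makes $-\FrechetLower$ larger, giving the bound in the correct direction. The $I$ and $V$ compartments are easier because the relevant dynamics are linear in the already-bounded marginals.

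The principal obstacle I anticipate is not any single inequality but the bookkeeping of the coupling: the upper bound on $\E[X_i^E]$ depends on upper bounds for $\E[X_i^S]$ and $\E[X_j^E], \E[X_j^I]$, while the upper bound on $\E[X_i^S]$ depends on \emph{lower} bounds for $\E[X_j^E], \E[X_j^I]$, so the two sides of the inclusion cannot be established separately. Taking the minimum of all $8n$ errors and analyzing a single potential crossing time uniformly is what makes the contradiction close cleanly, because at $t^\star$ every other bound is still valid and can be plugged into the Fr\'echet estimate for the compartment that is crossing. A small subtlety is that the right-hand sides of \eqref{eq:SEIV_frechet_apx_dyn} involve $\min$ and $\max$, so they are only locally Lipschitz; this is enough for existence and for the one-sided derivative comparison, but one should note this explicitly to justify treating $\dot{e}_i^C$ at $t^\star$ via Dini derivatives.
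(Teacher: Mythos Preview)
Your proposal is correct and essentially identical to the paper's approach. The paper packages the first-crossing argument into a standalone ``Two-Sided Multivariate Comparison Lemma'' (requiring the upper-bound dynamics to dominate the true dynamics on the set where the $(i,C)$-th upper component equals the true component while all other brackets still hold, and symmetrically for the lower bound) and then verifies term-by-term that \eqref{eq:SEIV_frechet_apx_dyn} satisfies those hypotheses via the Fr\'echet inequalities and monotonicity of $\FrechetLower,\FrechetUpper$---exactly your chain of inequalities and your observation that all $8n$ errors must be handled simultaneously because of the cross-coupling between upper and lower systems.
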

\begin{proof}
We state and prove an extension to the comparison lemma (see, e.g., \cite[Lemma 3.4]{Khalil2002}) in Appendix \ref{app:lem:comparison}, which demonstrates a condition under which component-wise orderings are preserved under integration of the dynamics.  This is the central technical feature of the proof.  Having established an appropriate version of the comparison lemma, all that remains to prove the theorem is to prove that the dynamics \eqref{eq:SEIV_frechet_apx_dyn} satisfy the hypothesis of Lemma \ref{lem:comparison}, where the approximation dynamics \eqref{eq:SEIV_frechet_apx_dyn} are to be compared against the exact dynamics \eqref{eq:SEIV_diff}.  We show this in Appendix \ref{app:veracity}, completing the proof. \proofend
\end{proof}

While the approximation dynamics given by \eqref{eq:SEIV_frechet_apx_dyn} are technically correct and \emph{can} be used to estimate $\E_{\Theta(\action)}[\InfectExposed(X(t+\delt)) | X(t)]$ rigorously, they are not without fault.  In particular, we may note that they include no mechanism for ensuring that the approximations remain bounded in the unit interval.  This means that there is no way of ruling out the possibility that the solutions of the approximating dynamics \eqref{eq:SEIV_frechet_apx_dyn} become trivial at some time.  Indeed, as demonstrated in Figure \ref{fig:frechet_apx}, it is the case the approximations generated from this system give upper-bounds which exceed one eventually, and hence become trivial.  This occurs because the error which enters the approximating system is integrated through time into the evolution of the approximations of the marginal probabilities.  Addressing this issue so as to guarantee that the approximations always remain nontrivial requires a deeper analysis, which comes in the following subsection.

\begin{figure}
	\centering
	\includegraphics[width = 0.5\textwidth]{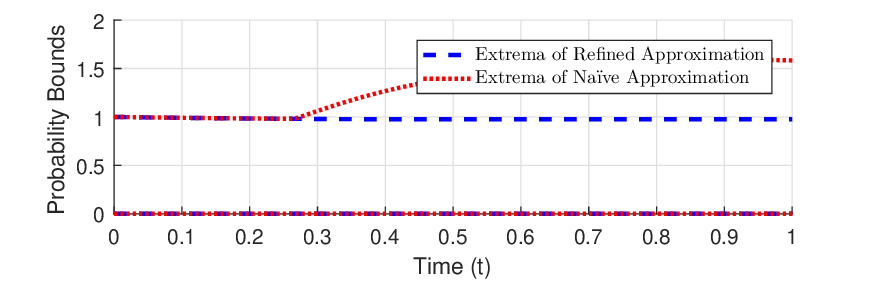}
	\caption{A demonstration that the crude \Frechet approximation system given by \eqref{eq:f_robust_dyn} does not generate approximations such that the estimates of compartmental membership probabilities remain bounded in the unit interval, whereas the refined approximation dynamics \eqref{eq:refined_dyn} does.  This underscores the fact that when designing a moment closure approximation, care must be taken to ensure that the solutions of the resulting approximation system behave reasonably.} \label{fig:frechet_apx}
\end{figure}

\subsection{Refining the Robust \Frechet Moment Closure}
\label{subsec:refining}

We begin this subsection by introducing a proposition which equips us with a formal test for determining when one set of approximation dynamics is better than another, in terms of set inclusion.  The result is stated as follows:

\begin{prop}[Constructing Tighter Approximations] 
\label{prop:tighter}
Consider the functions $\utilde{f}$ and $\tilde{f}$ defining the dynamics \eqref{eq:SEIV_frechet_apx_dyn}, and let $\{\utilde{x}, \tilde{x}\}$ be the solutions of \eqref{eq:SEIV_frechet_apx_dyn}.  Suppose $\ubar{f}$ and $\bar{f}$ are Lipschitz continuous functions on $\real^{(\Nodes \times \mathcal{L}) \times 2}$ which satisfy
\begin{equation}
	[\ubar{f}_i^C(\ubar{x},\bar{x}), \bar{f}_i^C(\ubar{x},\bar{x})] \subseteq [\utilde{f}_i^C(\utilde{x},\tilde{x}),\tilde{f}_i^C(\utilde{x},\tilde{x})]
\end{equation}
on $\bar{\mathcal{X}}_i^C \cup \ubar{\mathcal{X}}_i^C$ for all $(i,C) \in \Nodes \times \mathcal{L},$ where $\bar{\mathcal{X}}_i^C$ is the subset of points $(\ubar{x}, \bar{x},\utilde{x},\tilde{x})$ of $\real^{(\Nodes \times \mathcal{L}) \times 4}$ such that $\bar{x}_i^C = \tilde{x}_i^C,$ and $[\ubar{x}_j^{C^\prime},\bar{x}_j^{C^\prime}] \subseteq [\utilde{x}_j^{C^\prime}, \tilde{x}_j^{C^\prime}]$ holds for all $(j,C^\prime) \in \Nodes \times \mathcal{L},$ and $\ubar{\mathcal{X}}_i^C$ is defined similarly.  Then, the system
\begin{equation}
\begin{aligned}
\dot{\bar{x}} = \bar{f}(\ubar{x},\bar{x}),\\
\dot{\ubar{x}} = \ubar{f}(\ubar{x},\bar{x}),\\
\end{aligned}
\end{equation}
with initial conditions $\ubar{x}(0) = \utilde{x}(0),$ $\bar{x}(0) = \tilde{x}(0)$ has solutions $\{\ubar{x},\bar{x}\}$ which satisfy $[\ubar{x}_i^C(t),\bar{x}_i^C(t)] \subseteq [\utilde{x}_i^C(t),\tilde{x}_i^C(t)]$
for all $(i,C) \in \Nodes \times \mathcal{L}$ and any time $t \geq 0.$ 
\end{prop}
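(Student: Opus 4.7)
The plan is to invoke the comparison lemma already established in Appendix \ref{app:lem:comparison} (the same device used to prove Theorem \ref{thm:frechet_apx}), but applied at one higher level of abstraction: rather than comparing \eqref{eq:SEIV_frechet_apx_dyn} against the true moment dynamics \eqref{eq:SEIV_diff}, I would apply it to compare the candidate refined system $(\ubar{x},\bar{x})$ against the already-validated \Frechet system $(\utilde{x},\tilde{x})$. The shared initial condition $\ubar{x}(0)=\utilde{x}(0)$, $\bar{x}(0)=\tilde{x}(0)$ ensures the containment relation $[\ubar{x}_i^C(0),\bar{x}_i^C(0)]\subseteq[\utilde{x}_i^C(0),\tilde{x}_i^C(0)]$ holds trivially, so the entire task is to show this inclusion is an invariant of the joint flow.

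Concretely, I would introduce the partial ordering $(\ubar{x},\bar{x}) \curlyleq (\utilde{x},\tilde{x})$ defined coordinate-wise by interval inclusion, and argue by considering the first time $\tau$ at which the ordering could be violated. By continuity of trajectories, at $\tau$ some coordinate must be touching its counterpart from the inside of the interval, while all other coordinates still satisfy the containment; this is exactly the content of the sets $\bar{\mathcal{X}}_i^C$ and $\ubar{\mathcal{X}}_i^C$. Evaluating derivatives at such a touching configuration, the hypothesis $[\ubar{f}_i^C(\ubar{x},\bar{x}),\bar{f}_i^C(\ubar{x},\bar{x})]\subseteq[\utilde{f}_i^C(\utilde{x},\tilde{x}),\tilde{f}_i^C(\utilde{x},\tilde{x})]$ delivers exactly the two scalar inequalities needed: $\bar{f}_i^C \leq \tilde{f}_i^C$ on $\bar{\mathcal{X}}_i^C$ to prevent the upper bound from escaping above $\tilde{x}_i^C$, and $\ubar{f}_i^C \geq \utilde{f}_i^C$ on $\ubar{\mathcal{X}}_i^C$ to prevent the lower bound from dropping below $\utilde{x}_i^C$. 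Feeding this tangent-cone style condition into Lemma \ref{lem:comparison} gives the invariance and concludes the argument.

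Existence and uniqueness of the solutions $\{\ubar{x},\bar{x}\}$ on any finite horizon is a prerequisite for invoking the comparison lemma, and this is guaranteed by the hypothesized Lipschitz continuity of $\ubar{f}$ and $\bar{f}$; the already-existing solutions $\{\utilde{x},\tilde{x}\}$ of \eqref{eq:SEIV_frechet_apx_dyn} are also Lipschitz (since $\FrechetLower$ and $\FrechetUpper$ are Lipschitz, and the rest of \eqref{eq:SEIV_frechet_apx_dyn} is affine), so no regularity pathology appears at the touching set.

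The main obstacle I anticipate is that multiple coordinates can touch their bounds simultaneously, so the "first escape time" argument has to be phrased uniformly across all pairs $(i,C)\in\Nodes\times\mathcal{L}$ and across both the upper-touch and lower-touch flavors, rather than one coordinate at a time. This is handled cleanly by using the product containment set $\{(\ubar{x},\bar{x},\utilde{x},\tilde{x}) : (\ubar{x},\bar{x})\curlyleq(\utilde{x},\tilde{x})\}$ as the invariant set of interest and applying Nagumo-style viability reasoning at its boundary, which decomposes exactly into the union $\bigcup_{i,C}(\bar{\mathcal{X}}_i^C\cup\ubar{\mathcal{X}}_i^C)$ appearing in the hypothesis; the boundary condition then reads off coordinate-by-coordinate and the simultaneity issue disappears. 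Beyond this framing point, the remainder of the argument is routine bookkeeping.
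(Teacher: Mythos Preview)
Your approach is correct and would yield a valid proof, but it takes a different route from the paper's. You re-run the first-escape-time argument underlying Lemma~\ref{lem:comparison}, now with the invariant set being the interval-containment cone; your handling of simultaneous touching via the boundary decomposition $\bigcup_{i,C}(\bar{\mathcal{X}}_i^C\cup\ubar{\mathcal{X}}_i^C)$ is the right device. One caveat: Lemma~\ref{lem:comparison} as stated compares a \emph{single} trajectory against a bounding pair, not two bounding pairs against each other with inequalities running in opposite directions on the upper and lower components, so you cannot invoke it as a black box --- you are effectively re-deriving a two-sided variant from scratch. The paper instead introduces the Kamke--M\"uller lemma from monotone-systems theory (Lemma~\ref{lem:kamke-muller}) and reduces to it via a sign flip: setting $y=[\bar{x}^T,-\ubar{x}^T]^T$ and $z=[\tilde{x}^T,-\utilde{x}^T]^T$ converts the interval containment $[\ubar{x},\bar{x}]\subseteq[\utilde{x},\tilde{x}]$ into the single componentwise ordering $y\leq z$, after which the hypothesis on $\bar{\mathcal{X}}_i^C\cup\ubar{\mathcal{X}}_i^C$ becomes exactly the Kamke--M\"uller tangent condition on the sets $\mathcal{Q}_j$. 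This buys a one-line proof by deferring the escape-time analysis to a standard reference and makes the link to monotone dynamics explicit, at the cost of importing a second comparison lemma; your route is more self-contained but repeats machinery already deployed in the proof of Lemma~\ref{lem:comparison}.
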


\begin{proof}
	See Appendix \ref{app:prop:tighter}. \proofend
\end{proof}

\noindent Proposition \ref{prop:tighter} gives a means for testing whether or not the crude approximation given by \eqref{eq:SEIV_frechet_apx_dyn} is improved upon by a new candidate approximation, and is the main result we use in developing a refinement of \eqref{eq:SEIV_frechet_apx_dyn}.  

The most significant modification over \eqref{eq:SEIV_frechet_apx_dyn} required to ensure all estimates remain bounded to the unit interval for all time is in ensuring the dynamics of each approximating upper-bound are non-increasing when the approximating upper-bound is equal to one, and the dynamics of the approximating lower-bound are non-decreasing when it is equal to zero.  To accomplish such an approximation, we introduce complement bounds to the dynamics, which we define formally as follows.

\begin{defn}[Complement Bounding Operator]
	{\rm
	Let $C \in \{S,E,I,V\}$ be a compartmental label of the $SEIV$ spreading process.  We define the complement upper-bounding operator associated to $C$ as the nonlinear operator
	\begin{equation}
		\Boundupri{C}y \triangleq \min\{1 - \bar{x}_i^C, y\}.
	\end{equation}
	}
\end{defn}

In essence, we see that improvements over \eqref{eq:SEIV_frechet_apx_dyn} can be made if we replace all instances of variables which can cause unbounded growth with appropriate complement bounds.  By doing so systematically, we arrive at a better approximating system, which we give here in Theorem \ref{thm:refined}.

\begin{theorem}[Refined \Frechet Moment Closure  of $SEIV$]
	\label{thm:refined}
	Let $\ubar{x}(0) = X(0) = \bar{x}(0),$ and consider the solutions of the system of nonlinear ordinary differential equations
	\begin{equation}
	\label{eq:refined_dyn}
	\begin{aligned}
		\dot{\bar{x}}_i^S =& \alpha_i \Boundupri{S}\bar{x}_i^V - \xi_i \bar{x}_i^S - \sum_{j \in \Neighbors_i} \beta_{ij} \FrechetLower(\bar{x}_i^S,\ubar{x}_j^E) - \gamma_{ij} \FrechetLower(\bar{x}_i^S, \ubar{x}_j^I),\\
		\dot{\ubar{x}}_i^S =& \alpha_i \ubar{x}_i^V - \xi_i \ubar{x}_i^S- \sum_{j \in \Neighbors_i} \beta_{ij}\FrechetUpper(\ubar{x}_i^S,\bar{x}_j^E) - \gamma_{ij} \FrechetUpper(\ubar{x}_i^S, \bar{x}_j^I),\\
		\dot{\bar{x}}_i^E =& \sum_{j \in \Neighbors_i} \beta_{ij} \FrechetUpper(\Boundupri{E}\bar{x}_i^S,\bar{x}_j^E) + \gamma_{ij}\FrechetUpper(\Boundupri{E} \bar{x}_i^S, \bar{x}_j^I) -  \delta_i \bar{x}_i^E,\\
		\dot{\ubar{x}}_i^E =& \sum_{j \in \Neighbors_i} \beta_{ij} \FrechetLower(\ubar{x}_i^S,\ubar{x}_j^E) + \gamma_{ij}\FrechetLower(\ubar{x}_i^S, \ubar{x}_j^I) - \delta_i \ubar{x}_i^E,\\
		\dot{\bar{x}}_i^I =& \delta_i \Boundupri{I}\bar{x}_i^E - \eta_i \bar{x}_i^I,\\
		\dot{\ubar{x}}_i^I =& \delta_i \ubar{x}_i^E - \eta_i \ubar{x}_i^I,\\
		\dot{\bar{x}}_i^V =& \eta_i \Boundupri{V}\bar{x}_i^I + \xi_i \Boundupri{V}\bar{x}_i^S - \alpha_i\bar{x}_i^V,\\
		\dot{\ubar{x}}_i^V =&  \eta_i \ubar{x}_i^I +  \xi_i \ubar{x}_i^S - \alpha_i \ubar{x}_i^V
	\end{aligned}
	\end{equation}
	Then, for every compartment $C \in \mathcal{L}$ and each node $i \in \Nodes,$
	\begin{equation}
		\label{incl:refined_frechet_apx}
		\E[X_i^C(t) | X(0)] \in \left[\ubar{x}_i^C(t|0), \bar{x}_i^C(t|0) \right] \subseteq [0,1],\\
	\end{equation}
	and
	\begin{equation}
		\label{incl:inner_outer}
		\left[\ubar{x}_i^C(t|0), \bar{x}_i^C(t|0) \right] \subseteq \left[\utilde{x}_i^C(t|0), \tilde{x}_i^C(t|0) \right],
	\end{equation}
	hold for all $t \geq 0,$ where $\ubar{x}_i^C(t|0)$ and $\bar{x}_i^C(t|0)$ are used to denote the under- and over-approximation of $\E[X_i^C(t)|X(0)]$ with respect to the dynamics \eqref{eq:refined_dyn}, and $\utilde{x}_i^C(t|0)$ and $\tilde{x}_i^C(t|0)$ are used to denote the under- and over-approximation of $\E[X_i^C(t)|X(0)]$ with respect to the dynamics \eqref{eq:SEIV_frechet_apx_dyn}.
\end{theorem}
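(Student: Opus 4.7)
The plan is to decompose the theorem's three assertions into two independent parts, each handled with a dedicated tool. Inclusion \eqref{incl:inner_outer} will follow from a direct application of Proposition \ref{prop:tighter}, taking the crude dynamics \eqref{eq:SEIV_frechet_apx_dyn} as the outer reference system and the refined dynamics \eqref{eq:refined_dyn} as the candidate inner system. Once \eqref{incl:inner_outer} is in hand, the first containment in \eqref{incl:refined_frechet_apx} is immediate by transitivity, since Theorem \ref{thm:frechet_apx} already places $\E[X_i^C(t)]$ inside $[\utilde{x}_i^C(t),\tilde{x}_i^C(t)]$. The containment $[\ubar{x}_i^C(t),\bar{x}_i^C(t)] \subseteq [0,1]$ will be established separately, by a Nagumo-type boundary argument applied directly to \eqref{eq:refined_dyn}.

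To apply Proposition \ref{prop:tighter}, I will verify, compartment by compartment, that on the set $\bar{\mathcal{X}}_i^C$ (where $\bar{x}_i^C = \tilde{x}_i^C$ and the inner bounds are nested inside the outer bounds at every other coordinate) the refined rate satisfies $\bar{f}_i^C \leq \tilde{f}_i^C$, and symmetrically that $\ubar{f}_i^C \geq \utilde{f}_i^C$ on $\ubar{\mathcal{X}}_i^C$. Two structural observations drive each of these comparisons. First, the \Frechet operators $\FrechetLower$ and $\FrechetUpper$ are componentwise monotone, so tightening their arguments (replacing $\utilde{x}$ by $\ubar{x}$ and $\tilde{x}$ by $\bar{x}$, which is precisely what the refined system does) shrinks the nonnegative sources and enlarges the nonnegative sinks in the correct directions. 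Second, the complement-bounding operator satisfies $\Boundupri{C} y \leq y$ pointwise, so every occurrence of $\Boundupri{C}$ in \eqref{eq:refined_dyn} weakly decreases a positive contribution to $\dot{\bar{x}}_i^C$ and never touches the negative contributions. Working through the eight rate functions with these two observations yields the required componentwise inequalities; the coordinates not involving a complement operator reduce to arguments essentially identical to the ones used to validate Theorem \ref{thm:frechet_apx}. Since $\min$ and $\max$ preserve Lipschitz continuity, the regularity hypothesis of Proposition \ref{prop:tighter} is also satisfied.

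For the invariance of $[0,1]^{|\Nodes|\cdot|\mathcal{L}|}$, I will check the boundary behavior of \eqref{eq:refined_dyn} at each face $\bar{x}_i^C = 1$ and $\ubar{x}_i^C = 0$. At $\bar{x}_i^E = 1$, $\bar{x}_i^I = 1$, or $\bar{x}_i^V = 1$, the complement operator $\Boundupri{C}$ attached to the unique positive source in $\dot{\bar{x}}_i^C$ evaluates to $\min\{0,\cdot\} = 0$, wiping out that term and leaving only nonpositive sinks. The case $\bar{x}_i^S = 1$ is slightly different because the $S$-equation's positive source is $\alpha_i \Boundupri{S}\bar{x}_i^V = \alpha_i\min\{0,\bar{x}_i^V\} = 0$, and one verifies directly that the remaining drain $-\xi_i \bar{x}_i^S$ and the $\FrechetLower$ contributions are nonpositive. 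At $\ubar{x}_i^C = 0$, either $\FrechetLower$ or $\FrechetUpper$ applied to a zero argument vanishes (using $\ubar{x}_j^\cdot \leq 1$ and $\bar{x}_j^\cdot \geq 0$), so every negative term in $\dot{\ubar{x}}_i^C$ disappears and only nonnegative contributions remain. Combining these face-wise inequalities with the local Lipschitz continuity of the vector field yields forward invariance of the unit hypercube by a standard tangency argument.

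The step I expect to be the main obstacle is the bookkeeping in the second paragraph: the refined system pairs upper bounds with the opposite-compartment's lower bounds (e.g.\ $\FrechetLower(\bar{x}_i^S,\ubar{x}_j^E)$), which is a different pairing than appears in the crude system, and one must carefully track which of $\utilde{x},\tilde{x},\ubar{x},\bar{x}$ occupies each slot when verifying the set-inclusion hypothesis of Proposition \ref{prop:tighter}. Once the case analysis is organized by compartment, each individual inequality is a one-line consequence of either monotonicity of $\min/\max$ or of $\Boundupri{C} y \leq y$, so the difficulty is organizational rather than conceptual.
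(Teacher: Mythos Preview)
Your plan for \eqref{incl:inner_outer} via Proposition~\ref{prop:tighter} and for the $[0,1]$-invariance via a boundary/tangency argument are both sound and match the paper's proof. However, your argument for the first containment in \eqref{incl:refined_frechet_apx} has a genuine logical gap.

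You write that once \eqref{incl:inner_outer} is established, $\E[X_i^C(t)] \in [\ubar{x}_i^C(t),\bar{x}_i^C(t)]$ follows ``by transitivity'' from Theorem~\ref{thm:frechet_apx}. But the direction is wrong: Theorem~\ref{thm:frechet_apx} gives $\E[X_i^C(t)] \in [\utilde{x}_i^C(t),\tilde{x}_i^C(t)]$, and \eqref{incl:inner_outer} gives $[\ubar{x}_i^C(t),\bar{x}_i^C(t)] \subseteq [\utilde{x}_i^C(t),\tilde{x}_i^C(t)]$. These two facts do \emph{not} place $\E[X_i^C(t)]$ inside the smaller interval $[\ubar{x}_i^C(t),\bar{x}_i^C(t)]$; a point in the larger set need not lie in a given subset of it.

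The paper closes this gap not by transitivity but by re-applying the two-sided comparison lemma (Lemma~\ref{lem:comparison}) directly to the refined system \eqref{eq:refined_dyn} against the true expectation dynamics \eqref{eq:SEIV_diff}. The key is that the complement bounding operator does not destroy the over-approximation property: on the comparison set where $\bar{x}_i^C = \E[X_i^C]$, one has $1 - \bar{x}_i^C = 1 - \E[X_i^C] \geq \E[X_i^{C'}]$ for any other compartment $C'$ (because compartmental membership probabilities sum to one), so $\Boundupri{C}\bar{x}_i^{C'} = \min\{1 - \bar{x}_i^C,\bar{x}_i^{C'}\} \geq \E[X_i^{C'}]$ whenever $\bar{x}_i^{C'} \geq \E[X_i^{C'}]$. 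Hence each positive source in $\dot{\bar{x}}_i^C$, even after clipping, still dominates the corresponding term of the true dynamics, and Lemma~\ref{lem:comparison} yields $\E[X_i^C(t)] \in [\ubar{x}_i^C(t),\bar{x}_i^C(t)]$ directly. Your observation that $\Boundupri{C} y \leq y$ is exactly the inequality needed for Proposition~\ref{prop:tighter}; for this part of the theorem you need instead the \emph{lower} estimate on $\Boundupri{C} y$ just described.
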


\begin{proof}
	See Appendix \ref{app:thm:refined}. \proofend
\end{proof}

We can think of the process that we have used to arrive at \eqref{eq:refined_dyn} from \eqref{eq:SEIV_diff} as one of successively pruning the set of trajectories permitted by the approximating systems.  In the step where we moved from \eqref{eq:SEIV_diff} to \eqref{eq:SEIV_frechet_apx_dyn}, we used the \Frechet probability bounds to constrain the set of trajectories our system may admit as solutions to a superset of the set of probability measures the \Frechet approximations permit.  In moving from \eqref{eq:SEIV_frechet_apx_dyn} to \eqref{eq:refined_dyn}, we further restrict the set of solutions to those which satisfy simple complementarity bounds.  In so doing, each step improved the accuracy with which the dynamics are approximated.  We now consider how to use \eqref{eq:refined_dyn} in order to approximate the term $E_{\Theta(\action)}[\InfectExposed(X(t+\delt)) | X(t)]$ in the constraint function.

\subsection{Decay Constraint Approximation}
\label{subsec:decay_apx}

It is obvious that if we set $\ubar{x}(0) = X(0) = \bar{x}(0)$ and integrate \eqref{eq:refined_dyn} over the interval $[0,t],$ we get that
\begin{equation}
	\label{ineq:apx_exp}
	 \E_{\Theta(\action)}[\InfectExposed(X(t)) | X(0)] \leq \InfectExposed(\bar{x}(t))
\end{equation}
holds.  However, \eqref{ineq:apx_exp} is not the best approximation that can be derived from the solutions of \eqref{eq:refined_dyn}.  The optimal approximation which can be obtained from propagating the dynamics \eqref{eq:refined_dyn} can be found efficiently via linear programming, as we state formally in the following result and its proof.

\begin{theorem}[Optimal Approximation of Decay Function]
	\label{thm:opt_apx}
	Let $\ubar{x}(0) = X(0) = \bar{x}(0),$ and consider the solutions $(\ubar{x},\bar{x})$ of \eqref{eq:refined_dyn} evaluated at time $t.$  It holds that
	\begin{equation}
	\label{ineq:opt_bound}
	\begin{aligned}
	& \E_{\Theta(\action)}[\InfectExposed(X(t)) | X(0)] \leq \psi(X(0),a) \triangleq \\
	& \hspace{5 pt} \sum_{i \in \Nodes} \min\{ \bar{x}_i^E(t|0) + \bar{x}_i^I(t|0), 1 - \ubar{x}_i^S(t|0) - \ubar{x}_i^V(t|0) \},
	\end{aligned}
	\end{equation} where the bound is the tightest which can be derived from the inclusions generated by integrating \eqref{eq:refined_dyn}, and in particular is pointwise tighter than the bound \eqref{ineq:apx_exp}.
\end{theorem}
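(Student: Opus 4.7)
The plan is to recognize the problem of finding the best bound on $\E_{\Theta(\action)}[\InfectExposed(X(t))|X(0)]$ derivable from the inclusions of Theorem \ref{thm:refined} as a linear program over the admissible first-moment vectors, and to solve it in closed form by exploiting the fact that the resulting LP decouples across nodes.

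First I would observe that since the indicators $\{X_i^C(t)\}_{C \in \mathcal{L}}$ form a partition of the event space at each $i \in \Nodes$ and each $t$, the marginals satisfy $\sum_{C \in \mathcal{L}} \E[X_i^C(t)|X(0)] = 1$ for every node. Combining this normalization with the box inclusions $\E[X_i^C(t)|X(0)] \in [\ubar{x}_i^C(t), \bar{x}_i^C(t)]$ produced by Theorem \ref{thm:refined}, I would define the feasible polytope of candidate marginals $\mu = (\mu_i^C)_{i,C}$ consistent with \eqref{incl:refined_frechet_apx}. By construction, any upper bound on $\E_{\Theta(\action)}[\InfectExposed(X(t))|X(0)] = \sum_{i \in \Nodes}(\E[X_i^E(t)] + \E[X_i^I(t)])$ derivable solely from these inclusions is an upper bound on the linear program that maximizes $\sum_{i \in \Nodes}(\mu_i^E + \mu_i^I)$ over this polytope; conversely, the LP value realizes the tightest such bound.

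Next I would note that both the objective and the constraints separate across nodes, so the LP decomposes into $|\Nodes|$ independent four-variable subproblems. For each $i$, the per-node bound $\mu_i^E + \mu_i^I \leq \bar{x}_i^E(t) + \bar{x}_i^I(t)$ is immediate from the box constraints, while eliminating the equality constraint yields $\mu_i^E + \mu_i^I = 1 - \mu_i^S - \mu_i^V \leq 1 - \ubar{x}_i^S(t) - \ubar{x}_i^V(t)$. The per-node optimum is therefore at most $\min\{\bar{x}_i^E(t) + \bar{x}_i^I(t),\, 1 - \ubar{x}_i^S(t) - \ubar{x}_i^V(t)\}$, and summing over $i$ delivers \eqref{ineq:opt_bound}. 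Tightness I would establish by exhibiting a feasible $\mu$ attaining the min for each node: when the first term is active, fix $(\mu_i^E,\mu_i^I) = (\bar{x}_i^E,\bar{x}_i^I)$ and choose $(\mu_i^S,\mu_i^V)$ within $[\ubar{x}_i^S,\bar{x}_i^S] \times [\ubar{x}_i^V,\bar{x}_i^V]$ summing to $1 - \bar{x}_i^E - \bar{x}_i^I$, which is possible precisely because the first term being active means $\bar{x}_i^E + \bar{x}_i^I \leq 1 - \ubar{x}_i^S - \ubar{x}_i^V$; a symmetric construction handles the other case. The pointwise improvement over \eqref{ineq:apx_exp} then follows from $\min\{a,b\} \leq a$ applied node-wise.

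The main obstacle I foresee is the feasibility argument justifying tightness: I must confirm that the per-node polytope defined by the four inclusions together with the normalization constraint is nonempty at every $t \geq 0$, which amounts to checking that the intervals $[\ubar{x}_i^C(t),\bar{x}_i^C(t)]$ produced by \eqref{eq:refined_dyn} remain mutually consistent with $\sum_C \mu_i^C = 1$ for some choice of $\mu_i^C$ in each interval. This does not follow immediately from \eqref{incl:refined_frechet_apx} taken component by component; rather it requires revisiting the invariance argument behind Theorem \ref{thm:refined} to verify that the true marginals $\E[X_i^C(t)|X(0)]$ themselves lie in all four intervals simultaneously while satisfying the normalization, which in turn certifies that the per-node LP is feasible and hence that the stated bound is attained.
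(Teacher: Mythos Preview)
Your proposal is correct and follows essentially the same route as the paper: both set up the tightest bound as the linear program $\max\sum_i(\mu_i^E+\mu_i^I)$ over marginals consistent with the box inclusions and the per-node normalization, decouple it across nodes, and read off the per-node optimum as the minimum of $\bar{x}_i^E+\bar{x}_i^I$ and $1-\ubar{x}_i^S-\ubar{x}_i^V$, with feasibility certified by the true marginals. Your stated obstacle is less of an obstacle than you suggest: nonemptiness of the per-node polytope is immediate from the \emph{statement} of Theorem~\ref{thm:refined} (the true marginals lie in all four intervals and automatically sum to one), so no revisiting of its proof is needed---this is exactly how the paper dispatches the point.
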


\begin{proof}
See Appendix \ref{app:thm:opt_apx}. \proofend
\end{proof}

From Theorem \ref{thm:opt_apx}, we can see that while the states of the dynamical system \eqref{eq:refined_dyn} are not themselves the best approximation possible for the conditional expectation $\E_{\Theta(\action)}[\InfectExposed(X(t)) | X(0)],$ the optimal approximation can be defined as a nonlinear output function $\sum_{i \in \Nodes} \min\{ \bar{x}_i^E(t|0) + \bar{x}_i^I(t|0), 1 - \ubar{x}_i^S(t|0) - \ubar{x}_i^V(t|0) \},$ and can thus be computed as efficiently as integrating the dynamics \eqref{eq:refined_dyn}.  Since we may repeat the arguments used in support of Theorem \ref{thm:opt_apx} just as easily with the initial time taking the value $t,$ it is clear that we can efficiently compute an upper-bound on $\E_{\Theta(\action)}[\ell(X(t + \delt) )| X(t)]$ from any state, at any time.  We use this approximation scheme in a simulated application in Section \ref{sec:application}, where we see (Figure \ref{fig:sim_results}) that the approximation generates nontrivial approximations that are useful for controlling the $SEIV$ process.

\section{Convergence Analysis}
\label{sec:control}

In this section, we analyze the convergence of the controller induced by applying suboptimal solutions of \eqref{prog:empc_def}, where we choose $\StabFun (X(t),\action)$ as in \eqref{eq:Sdef}, and $\E_{\Theta(\action)} [\ell(X(t+\delt)) | X(t)]$ is approximated by the method described in Theorem \ref{thm:opt_apx} (see Section \ref{subsec:decay_apx}).  Abstractly, we are most concerned with ensuring that the epidemic attains membership in the set of disease-free states quickly.  To formalize this notion, we define the \emph{elimination time} of the process as follows:
\begin{defn}[Elimination Time]
	{\rm
		The elimination time $\telim$ of an $SEIV$ epidemic is the first time at which all nodes are in neither the exposed nor the infected compartment, i.e. $\telim \triangleq \inf\{t \geq 0 \, | \, \InfectExposed(X(t)) = 0\},$ where $\InfectExposed(X(t)) \triangleq \sum_{i \in \Nodes} X_i^E(t) + X_i^I(t).$
	}
\end{defn}
We seek a guarantee on the expected elimination time of the $SEIV$ process under the designed EMPC scheme.  Since the $SEIV$ process, as well as most compartmental epidemic processes in general, attains the disease-free set in finite time almost surely under all but the most pathological control laws, we must consider a strong notion of stability for it to be meaningful.  Here, we show that the expectation $\telim$ grows slowly with the size of the initial infection.

The analysis that we perform to arrive at an upper bound for the expectation of the elimination time $\telim$ relies critically on knowledge of the evolution of the expected number of exposed and infected nodes in the graph.  As such, we first analyze this expectation (in Section \ref{subsec:EI_bound}), and then analyze the expectation of the elimination time (in Section \ref{subsec:elim_bound}). 

\subsection{\small Bounding the Expected Number of Exposed and Infected Nodes}
\label{subsec:EI_bound}
We find that the proposed EMPC method uniformly exponentially eliminates the epidemic in expectation, with respect to the set of sampling times $\SamplingTimes,$ stated formally as follows. 

\begin{theorem}[Exponential Elimination Under EMPC]
	\label{thm:online}
	Let $X(0)$ be the initial state of the $SEIV$ process, choose any $r > 0 ,$ and any $\delt > 0.$  Suppose an auxiliary control policy $\uaux$ exists such that 
	\begin{equation}
	\label{ineq:e_decay}
	\begin{aligned}
	& \E_{\uaux}[\InfectExposed(X(t + \delt))| X(t)] \leq \InfectExposed(X(t)) \Exp{-r \delt},
	\end{aligned}
	\end{equation}
	for all $t \in \SamplingTimes.$  Then, the evolution of the $SEIV$ process under the policy $\umpc$ generated by the economic model predictive controller specified in Section \ref{subsec:empc} satisfies
	\begin{equation}
	\label{ineq:alg1_decay_samples}
	\begin{aligned}
	& \E_{\umpc}[\InfectExposed(X(t)) | X(0)] \leq \InfectExposed(X(0)) \Exp{-r t},
	\end{aligned}
	\end{equation}
for all $t \in \SamplingTimes.$  Moreover, the bound \eqref{ineq:alg1_decay_samples} is tight.
\end{theorem}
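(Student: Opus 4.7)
The plan is to derive the decay bound \eqref{ineq:alg1_decay_samples} by induction on the sampling index $k \in \Zplus$ with $t = k \delt,$ using the one-step decay implied by feasibility of the stability constraint together with the tower property of conditional expectation. First, I would observe that hypothesis \eqref{ineq:e_decay} ensures that at every sampling time and in every realized state, the auxiliary action $\uaux$ is feasible for \eqref{prog:empc_def}: substituting $\uaux$ into the definition \eqref{eq:Sdef} of $\StabFun$ yields $\StabFun(X(t),\uaux) \leq 0$ directly. Consequently, the EMPC policy $\umpc,$ which by construction in Section \ref{subsec:empc} always returns some feasible (possibly suboptimal) action, satisfies $\StabFun(X(t),\umpc(X(t))) \leq 0$ for all $t \in \SamplingTimes,$ and rearranging \eqref{eq:Sdef} gives the pathwise one-step bound
\begin{equation*}
\E_{\Theta(\umpc(X(t)))}[\InfectExposed(X(t+\delt)) \mid X(t)] \leq \InfectExposed(X(t))\, \Exp{-r\delt}.
\end{equation*}

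Next I would perform the induction on $k.$ The base case $k=0$ is immediate, since $\E_{\umpc}^{[x_0]}[\InfectExposed(X(0))] = \InfectExposed(x_0).$ For the inductive step, assume \eqref{ineq:alg1_decay_samples} holds at $(k-1)\delt.$ Conditioning on $X((k-1)\delt),$ applying the pathwise one-step bound inside the outer expectation by monotonicity, and invoking the inductive hypothesis yields
\begin{equation*}
\begin{aligned}
\E_{\umpc}^{[x_0]}[\InfectExposed(X(k\delt))]
&= \E_{\umpc}^{[x_0]}\!\bigl[\E_{\umpc}[\InfectExposed(X(k\delt)) \mid X((k-1)\delt)]\bigr] \\
&\leq \E_{\umpc}^{[x_0]}[\InfectExposed(X((k-1)\delt))]\,\Exp{-r\delt} \\
&\leq \InfectExposed(x_0)\,\Exp{-r k \delt},
\end{aligned}
\end{equation*}
closing the induction.

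For tightness, I would exhibit a minimal example in which the inequality is attained with equality: take a single-node instance with a singleton action set $\ActionSet = \{\uaux\}$ whose induced dynamics produce expected decay exactly $\Exp{-r\delt}$ per sampling interval. Then $\umpc \equiv \uaux$ is forced, the stability constraint is saturated at each $t \in \SamplingTimes,$ and \eqref{ineq:alg1_decay_samples} holds with equality. The only real subtlety—rather than a substantive obstacle—is justifying that the pathwise one-step bound can be slipped under the outer expectation even though $\umpc$ is a random state-feedback policy; this is handled by noting the per-step inequality holds almost surely over the conditioning random variable $X((k-1)\delt),$ after which monotonicity of expectation closes the step. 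The argument is otherwise essentially bookkeeping, and the bulk of the technical effort required for the overall convergence story instead sits in Theorems \ref{thm:refined} and \ref{thm:opt_apx}, which make the stability constraint checkable in practice.
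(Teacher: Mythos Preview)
Your proposal is correct and follows essentially the same route as the paper's proof: establish the pathwise one-step decay from feasibility of the stability constraint (guaranteed by the auxiliary policy), then induct on the sampling index using the tower property of conditional expectation, and close with a tightness remark. The only cosmetic differences are that the paper anchors the induction at $t=\delt$ rather than $t=0$, and its tightness argument simply observes that equality holds whenever \eqref{ineq:e_decay} is saturated at every sampling time rather than exhibiting a concrete instance.
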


\begin{proof}
	See Appendix \ref{app:thm:online}. \proofend
\end{proof}

The proof of Theorem \ref{thm:online} follows from an induction argument, which makes appeals to the expectation decay constraint \eqref{ineq:e_decay}, fundamental tools from the theory of probability, and the decay property encoded in the actions contained in feasible set of \eqref{prog:empc_def} through the constraint $\StabFun (X(t),\action) \leq 0.$  With respect to our problem, the principle importance of Theorem \ref{thm:online} is in allowing us to rigorously analyze the elimination time of the process, which we perform in the following subsection.

\subsection{Bounding the Expected Elimination Time}
\label{subsec:elim_bound}

While it is intuitive that an $SEIV$ process in which the total count of exposed and infected nodes decays exponentially quickly might have a small elimination time, there is no immediately apparent link between the two concepts.  Moreover, the decay property guaranteed by Theorem \ref{thm:online} is \emph{not} uniform exponential elimination; the exponential decay is only guaranteed on a countable subset of times.  As such, we may only infer anything about the expected number of exposed and infected nodes on a small subset of times, and must use this information to prove a result on the elimination time of the process.  We know of no previously published technique for doing so in the literature, but fortunately it can be done, as shown in the following result and its proof:
\begin{theorem}[Bound on Expected Elimination Time]
	\label{thm:upper_bound}
	 Suppose the $SEIV$ process evolves under the policy $\umpc$ generated by the EMPC method detailed in Section \ref{subsec:empc}, where $\StabFun (X(t),\action)$ is defined by \eqref{eq:Sdef}, and $\E_{\Theta(\action)}[\ell(X(t + \delt)) | X(t)]$ is approximated by the method detailed in Theorem \ref{thm:opt_apx}.  Then, the expected elimination time satisfies
		\begin{equation}
		\label{ineq:elim_bound}
		\E_{\umpc}[\telim | X(0)] \leq \tone + \frac{\me^{-r \tone}}{1 - \me^{-r \delt }} \delt \InfectExposed(X(0)),
		\end{equation}
	where $\tone$ is the first time in the sampling time set such that the expected number of exposed and infected nodes is less than one, which can be shown to be
		\begin{equation}
			\label{eq:time_to_one}
			\tau_1 = \left \lceil \frac{\log(\InfectExposed(X(0)))}{r \delt} \right \rceil \delt,
		\end{equation}
		where $\lceil a \rceil$ denotes the smallest integer larger than $a,$ i.e. the ceil of $a.$
		Moreover, the bound \eqref{ineq:elim_bound} is tight.
\end{theorem}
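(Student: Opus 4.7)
The plan is to convert the expected-number decay of Theorem \ref{thm:online} into a tail bound on $\telim$ via Markov's inequality, and then integrate the tail to bound $\E[\telim]$. The critical structural observation is that the disease-free set $\{X : \InfectExposed(X) = 0\}$ is absorbing for the $SEIV$ dynamics: inspecting \eqref{eq:SEIV_process}, whenever every $X_j^E$ and $X_j^I$ vanishes, every source term in $\deriv X_i^E$ and in $\deriv X_i^I$ vanishes as well, so the process remains in the disease-free set for all later times. This has two immediate consequences: $\{\telim > t\} = \{\InfectExposed(X(t)) \geq 1\}$, and the survival function $t \mapsto \Prob(\telim > t)$ is non-increasing.

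First, I will derive the closed-form \eqref{eq:time_to_one} for $\tone$. By Theorem \ref{thm:online}, $\E_{\umpc}^{[x_0]}[\InfectExposed(X(k\delt))] \leq \InfectExposed(x_0) \me^{-r k \delt}$ for every $k \in \Zplus$, so the smallest $k$ for which this upper bound drops below one satisfies $\InfectExposed(x_0) \me^{-r k \delt} < 1$, i.e.\ $k > \log(\InfectExposed(x_0))/(r \delt)$, yielding $k = \lceil \log(\InfectExposed(x_0))/(r\delt) \rceil$ and hence $\tone = k \delt$.

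Next, I will invoke the tail-integral identity $\E[\telim] = \int_0^\infty \Prob(\telim > t)\, \deriv t$ and use the monotonicity of the survival function to dominate the integral by the left-Riemann sum $\delt \sum_{k=0}^\infty \Prob(\telim > k \delt)$. Because $\InfectExposed(X(k\delt))$ is non-negative integer-valued, Markov's inequality combined with Theorem \ref{thm:online} gives $\Prob(\telim > k \delt) \leq \E[\InfectExposed(X(k\delt))] \leq \InfectExposed(x_0) \me^{-r k \delt}$ at every sampling time. Splitting the sum at index $\tone/\delt$, the prefix contributes exactly $\tone$ through the trivial bound $\Prob(\telim > k \delt) \leq 1$, while the tail is a geometric series summing to $\delt\, \InfectExposed(x_0) \me^{-r \tone}/(1 - \me^{-r \delt})$, reproducing \eqref{ineq:elim_bound}.

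The main technical obstacle is the passage from the sampled-time expectation bound of Theorem \ref{thm:online} to a genuinely continuous-time bound on $\E[\telim]$; this step is justified only because the absorbing property of the disease-free set makes the survival function monotone, permitting the left-Riemann dominance of the tail integral. For the tightness claim, the plan is to exhibit a degenerate construction, for instance a single-node configuration whose realized decay schedule saturates every inequality used above at each sampling time, thereby showing that no strictly tighter bound is available given only the decay guarantee of Theorem \ref{thm:online}.
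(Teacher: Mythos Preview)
Your proposal is correct and follows essentially the same route as the paper's proof: both use the tail-integral (layer-cake) representation of $\E[\telim]$, identify $\{\telim > t\}$ with $\{\InfectExposed(X(t)) \geq 1\}$ via the absorbing property, bound the survival probability at sampling times by a Markov-type argument using the decay of Theorem~\ref{thm:online}, extend to intermediate times by monotonicity, and sum the resulting geometric series. The only cosmetic differences are that the paper phrases the Markov step as a small linear program over distributions on $[n]_0$ (arriving at $\min\{1,\InfectExposed(x_0)\me^{-r\lfloor t/\delt\rfloor\delt}\}$) and integrates a step function directly, whereas you invoke Markov's inequality and dominate the integral by a left-Riemann sum; the two computations coincide term by term. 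Your tightness sketch is also in the same spirit as the paper's, which argues tightness by noting that each inequality in the chain (Theorem~\ref{thm:online} and the Markov bound) is individually tight rather than exhibiting a fully explicit witness.
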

\begin{proof}
The essence of this argument relies on approximating the integral which defines the expected elimination time.  The approximation occurs in three steps:(i) representing the expectation as an integral of the distribution function of the elimination time random variable, (ii) finding a convergent, closed-form approximation to the distribution function of the elimination time random variable, and (iii) evaluating the integral of the approximated distribution function.

Since $\telim$ is a non-negative random variable which takes values on the real line, we may use the layer-cake representation of expectation to identify the equivalence
\begin{equation}
\label{eq:layer_cake}
\E_{\umpc}[\telim | X(0)] \triangleq \int_{0}^{\infty} 1 - F_{\telim}(h) d h
\end{equation}
where $F_{\telim}$ is the distribution function of $\telim,$ i.e. $F_{\telim} (t) \triangleq \Pr(\telim \leq t).$
Since once $\InfectExposed(X(\tau)) = 0$ for some $\tau,$ it holds for all $t \geq \tau,$ we have the identity $\Pr(\telim \leq t) = \Pr(\InfectExposed(X(t)) = 0),$
which is equivalent to the expression $1 - \Pr(\telim \leq t) = \Pr(\InfectExposed(X(t)) > 0).$

As such, we may construct an upper-bound on $1 - F_{\telim}(t)$ by constructing an upper-bound on $\Pr(\InfectExposed(X(t)) > 0).$  We do so using the decay properties already proven of the designed controller.  In particular, Theorem \ref{thm:online} gives that
\begin{equation}
	\label{ineq:exp_con_def}
	\E_{\umpc}[\InfectExposed(X(t)) | X(0)] \leq \InfectExposed(X(0)) \Exp{-r t}
\end{equation}
holds for all $t \in \SamplingTimes,$ and so we may upper-bound $\Pr(\InfectExposed(X(t)) > 0)$ for all $t \in \SamplingTimes$ by the optimal value of
\begin{subequations}
\label{prog:var_bound}
\begin{align}
& \underset{p \in \Dists_{[n]_0}}{\text{maximize}}
& & \sum_{k = 1}^{n} p_{k} \\
& \text{subject to}
& & \sum_{k = 0}^{n} k p_{k} \leq \InfectExposed(X(0)) \Exp{-r \left \lfloor \frac{t}{\delt} \right \rfloor \delt}, \label{ineq:exp_con}
\end{align}
\end{subequations}
where $\lfloor a \rfloor$ denotes the largest integer smaller than $a,$ and $\Dists_{[n]_0}$ is the set of all possible marginal probability assignments over $[n]_0,$ so chosen because the number of exposed and infected nodes in the graph must take value on $[n]_0,$ and the constraint \eqref{ineq:exp_con} enforces the expectation inequality \eqref{ineq:exp_con_def}.  Since the left-hand-side of inequality \eqref{ineq:exp_con} is least sensitive to increases in $p_1$ for any value $p_k$ with $k \geq 1,$ one may show that the optimal value of \eqref{prog:var_bound} can be computed as $\min\{1, \InfectExposed(X(0)) \Exp{-r \left \lfloor \frac{t}{\delt} \right \rfloor \delt} \}.$  Hence,
\begin{equation}
	\label{ineq:pr_ineq}
	\Pr(\InfectExposed(X(t)) > 0 | X(0)) \leq \min\{1, \InfectExposed(X(0)) \me^{-r \lfloor \frac{t}{\delt} \rfloor \delt} \}
\end{equation}
holds for all $t \in \SamplingTimes,$ where the bound \eqref{ineq:pr_ineq} is tight.  

Since the distribution function $F_{\telim}$ is non-decreasing with respect to $t$ by definition, and $1 - F_{\telim}(t) = \Pr(\InfectExposed(X(t)) > 0),$ we have that $\Pr(\InfectExposed(X(t)) > 0)$ is \emph{non-increasing} with respect to $t.$  Hence \eqref{ineq:pr_ineq} holds for \emph{all} times $t \geq 0.$ Thus, \eqref{eq:layer_cake} and \eqref{ineq:pr_ineq} together imply the inequality
\begin{equation}
\label{ineq:exp_bound}
\E_{\umpc}[\telim | X(0)] \leq \int_{0}^{\infty} \min\left\{1, \InfectExposed(X(0)) \me^{-r \left\lfloor \frac{h}{\delt} \right\rfloor \delt} \right\} d h.
\end{equation}	

We now seek a closed-form expression of the right hand side of \eqref{ineq:exp_bound}.  Defining $\tone$ as 
$$\tone \triangleq \inf\{t \in \realnonnegative \, |\,  \InfectExposed(X(0)) \me^{-r \left\lfloor \frac{t}{\delt} \right\rfloor \delt} \leq 1 \},$$
we can evaluate $\tone$ to satisfy the claimed identity \eqref{eq:time_to_one}, and can then rewrite \eqref{ineq:exp_bound} as
\begin{equation}
\label{ineq:elim_time_bound}
\E_{\umpc}[\telim | X(0)] \leq \tone + \sum_{j = \frac{\tone}{\delt} }^{\infty} \InfectExposed(X(0)) \delt \left( \me^{-r \delt} \right)^j.
\end{equation}	
Evaluating the tail of the geometric sum in \eqref{ineq:elim_time_bound} gives
\begin{equation*}
\E_{\umpc}[\telim | X(0)] \leq \tone + \frac{\me^{-r \tone}}{1 - \me^{-r \delt }} \delt \InfectExposed(X(0)),
\end{equation*}	
which is as stated in the theorem's hypothesis.

Finally, note that since the bound on the distribution function $F_{\telim}$ derived is optimal at all times among all such bounds which use only the bound on the expected number of exposed and infected nodes in the graph provided by Theorem \ref{thm:online}, and the bound provided by Theorem \ref{thm:online} is itself tight, it follows as well that the upper bound \eqref{ineq:exp_bound} is optimal among all such guarantees that can be provided by the designed EMPC.  This completes the proof.
\proofend

\end{proof}

Since \eqref{ineq:exp_bound} grows at worst linearly with respect to the number of initially infected and exposed nodes in the graph, the derived bound certifies that the designed EMPC scheme eliminates the epidemic from the network quickly.  Thus, the problem stated in Section \ref{subsec:problem_statement} has been appropriately solved.

Note that the proof given for Theorem \ref{thm:upper_bound} relies critically on the fact that the expected number of infected and exposed nodes decays exponentially quickly, as if this were not the case, the approximation used for the integrand would not be integrable, and the resulting approximation would be trivial.  Moreover, the argument relies on the topology of the state-space of epidemic processes in order to guarantee that the controller attains membership in the targeted set of states quickly.  Indeed, if it were not the case that the optimal value of \eqref{prog:var_bound} is bounded away from one after only a short amount of time, our attempt at approximating the expectation of $\telim$ meaningfully would fail as well.  Since this occurs \emph{only} because $\ell(X)$ \emph{must} take values on the set $[n]_0,$ it follows that such a convergence argument will not generalize to EMPC schemes on general state spaces, but \emph{will} generalize to other epidemic process readily, as all such processes taking place on finite graphs evolve on finite state spaces.

\begin{remark}[Use of Alternate Approximation Methods]
	{\rm
	As noted in Section \ref{sec:robust}, the motivation for constructing a robust moment closure as we have is to be certain that the convergence guarantees we arrive at give us information on the behavior of the statistics of the $SEIV$ process.  By adequately accounting for the worst-case introduction of approximation errors in the dynamics as we have, this goal was accomplished.  However, one may wish to use alternate approximations of the dynamics in regions of the state space wherein robustness is unimportant, in order to attempt to find actions which improve cost performance.
	
	For example, if for an $n$ node graph, we partition $\StateSpace$ into sets $\mathcal{W}_k = \{Y \in \StateSpace \, | \, \ell(Y) = k\}$ for each $k \in [n]_0,$ we may decide to use a mean-field type moment closure or a Monte Carlo simulation to approximate $\StabFun (X(t),\action)$ when $X \in \cup_{k = 0}^{\tilde{k}} \mathcal{W}_k$ for some $\tilde{k} < n,$ and use the robust approximations derived in Section \ref{sec:robust} otherwise.  In this situation, the controller developed here will drive $X$ into $\cup_{k = 0}^{\tilde{k}} \mathcal{W}_k$ quickly whenever it leaves the set.  That is, such a controller will be guaranteed to keep the total number of exposed and infected nodes below $\tilde{k}$ efficiently, and will otherwise use other approximations in order to attempt to use fewer resources.  The math required to formalize and prove the above claim is very similar to that which was used to prove the convergence guarantees in this section, and as such we will not present it here. \oprocend
	}
\end{remark}

\section{Application: Optimizing Quarantine Use} 
\label{sec:application}
In this section, we present a concrete application for the developed EMPC framework.  Note that while we only present one application here, the general principles contained in Sections \ref{sec:robust} and \ref{sec:control} are not constrained to this context.  Indeed, whenever anyone should want to consider a new application, all one needs to do is specify an action space $\ActionSet,$ a parameter map $\Theta,$ an auxiliary control law $\uaux,$ and an appropriate optimization method.

\subsection{Quarantine Model for $SEIV$}
\label{subsec:quarantine_model}

We consider the problem of strategically removing nodes from the spreading graph in order to efficiently drive an $SEIV$ epidemic to extinction quickly.  This is a mathematical model for the practical problem of deciding who to quarantine, and for how long, in the presence of an epidemic contagion.  Note that in the context of this problem, the maximum realized resource use rate is the number of quarantine beds needed throughout the course of the epidemic, and as such provides a reasonable index for evaluating the cost of the controller.  Furthermore, given that world governments are currently in the process of providing disease control agencies with sweeping authority to quarantine individuals exposed to infectious disease (see, e.g., the recent U.S. bill \cite{CDC2017}) despite recommendations from the medical community (see, e.g., \cite{Drazen2014}), understanding the mathematics of when quarantining is necessary for the control of a disease is of utmost importance.

We represent control actions here by an $n$-dimensional vector $\action$ in which $\action_i = 1$ if and only if the $i$'th node is removed from the spreading graph (i.e., quarantined), and $\action_i = 0$ otherwise. With this notation, we may represent our action space as $\ActionSet = \{0,1\}^n.$  We model quarantining a node by removing its outgoing edges from the spreading graph, i.e. we have for all pairs $(i,j),$ the exposure rates have the functional forms $\beta_{ij}(\action_j) \triangleq \BetaIJmax - \BetaIJmax \action_j,$ and $\gamma_{ij}(\action_j) \triangleq \GammaIJmax - \GammaIJmax \action_j,$
where each $\action_j$ is restricted to the set $\{0,1\}.$  For simplicity, we assume that cost of quarantining nodes is additive, and so may be represented as $\Cost(\action) \triangleq \sum_{i \in \Nodes} \action_i.$
In this context, the value of the cost function evaluated for a particular control action $\action$ is representative of the number of beds required to implement the quarantine strategy.  Applying economic model predictive control to this problem explicitly attempts to minimize the number of beds used in execution.

Note that our work here is \emph{not} the first to study the problem of quarantine management for models of biological disease.  Typical works from the pre-existing literature study quarantine management problems for mean-field epidemic models, and model a node being in quarantine by adding an additional compartment to the compartmental spreading model, which does not interact with any other compartments (see, e.g., \cite{Brauer2006,Yan2007} for specific instances).  The control design is done by way of varying the rate at which nodes transition to the quarantine compartment, with actuating the rate coming at a given cost.  Our model here is similar, in that our quarantined nodes do not interact with the rest of the network, and placing the node in quarantine comes at a cost to the controller.  Note, however, that our work here is the first work which considers quarantine optimization for stochastic networked epidemics, and in this sense is novel.

\subsection{An Auxiliary Control Law for Quarantine Optimization}
\label{subsec:total_quar}
As noted in Section \ref{subsec:empc}, our control scheme assumes the existence of an auxiliary control law which always satisfies the required expectation decay constraint.  While specifying an all-purpose auxiliary control law is outside of the scope of this paper, we demonstrate here how to construct one for the quarantining problem, with the hope that it will provide insight on how to do so in other application areas.  

The policy we design for the quarantine problem, which we refer to as the total quarantine policy, removes nodes which are either exposed to or infected by the disease at each time that the state of the process is observed.  Intuitively, this is a mathematical model for what is implemented in the event of a serious disease outbreak (e.g., the response to the Ebola epidemic of 2014 \cite{Drazen2014}).  This procedure is guaranteed mathematically to eliminate the contagion from the network exponentially quickly provided the control horizon is sufficiently long, as we show in the following result.
\begin{theorem}[Convergence of Total Quarantine Policy]
	\label{thm:total_quar}
	Suppose $\eta_i$ and $\delta_i$ are distinct for all $i \in \Nodes,$ choose $r < \min\{\eta_i, \delta_i\}$ for all $i,$ and $\delt$ to satisfy
	\begin{equation}
		\label{ineq:quar_smpl_bnd}
		\frac{\log(\max\{\delta_i,\eta_i\}) - \log(|\eta_i - \delta_i|)}{\min\{\delta_i,\eta_i\} - r} \leq \delt,
	\end{equation}
	for all $i \in \Nodes.$  Suppose further that at each $t \in \SamplingTimes,$ an action from the total quarantine policy $\utotal,$
	\begin{equation}
	\utotal(X) \triangleq
	\begin{cases}
	\action_i = 1, & \forall i \, \text{s.t.} \, X_i^E + X_i^I > 0,\\
	\action_i = 0, & \forall i \, \text{s.t.} \, X_i^E + X_i^I = 0,
	\end{cases}
	\end{equation}
	is applied and held constant until time $t + \delt.$  Then, the evolution of the $SEIV$ process satisfies
	\begin{equation}
	\begin{aligned}
	&\E_{\utotal}[\InfectExposed(X(t + \delt))|X(t)] \leq  \InfectExposed(X(t)) \me^{-r \delt}
	\end{aligned}
	\end{equation}
	for all $X(t) \in \StateSpace,$ and each $t \in \SamplingTimes.$
\end{theorem}

\begin{proof}
	See Appendix \ref{app:thm:total_quar}. \proofend
\end{proof}

Note that the statement requiring $\eta_i$ and $\delta_i$ to be distinct is one of explanatory convenience.  In particular, our analysis relies on solving a particular system of linear ordinary differential equations, of which $\eta_i$ and $\delta_i$ are the eigenvalues.  By requiring that they be distinct, we simplify the required proofs to only having to consider one possible type of solution.  Of course, the analysis can be done just as easily in the case that $\delta_i = \eta_i.$  However, the principle components of the argument are the same, and thus are left out of the paper.  It should also be noted that the sampling time bound given by \eqref{ineq:quar_smpl_bnd} is conservative.  This inequality was derived for the express purpose of providing a simple inequality which can be checked easily, at the expense of applying approximations.  

The total quarantine policy analyzed in Theorem \ref{thm:total_quar} is conservative.  It removes more nodes from the network than is required to eliminate the epidemic exponentially quickly.  The numerical experiments in Section \ref{subsec:experiments} verify that the proposed EMPC finds actions which are more efficient than the total quarantine policy.

\subsection{A Method for Optimizing Quarantine Use}
\label{subsec:pred_quar}

We now consider the task of of finding good approximate solutions to \eqref{prog:empc_def}.  Note that since the $SEIV$ process evolves on a state space with $4^n$ elements, evaluating the stability constraint $J(X,a) \leq 0$ precisely is in general difficult.  As such, we use the results developed in Section \ref{sec:robust} to evaluate the constraint conservatively.  In particular, at a state $X(0)$ and for a particular control action $a,$ define the function 
\begin{equation}
	\bar{J}(X(0),a) = \psi(X(0),a) - \ell(X(0)) \Exp{-r \delt},
\end{equation}
where the term $\psi(X(0),a)$ is defined as in Theorem \ref{thm:opt_apx}.  Since $\bar{J}(X(0),a) \geq J(X(0),a),$ it follows that $\bar{J}(X(0),a) \leq 0$ implies $J(X(0),a) \leq 0.$  Hence, computing an approximate solution which is feasible to
\begin{equation}
\label{prog:empc_def_apx}
\underset{\action \in \ActionSet}{\text{minimize}} \{ \Cost(\action) \, | \,   \bar{J} (X(t),\action) \leq 0\},
\end{equation}
also computes an approximate solution which is feasible to \eqref{prog:empc_def}.  Note that \eqref{prog:empc_def_apx} is itself still not easy to solve: it is an integer programming problem.  While there are relatively sophisticated methods for solving various classes of integer programs, they all require the presence of some special structure to work well, as integer programs are in general $\NP$-hard.  Moreover, even computing suboptimality bounds for arbitrary integer programming problem is difficult \cite{Burer2012,Lee2012}.  As such, we use a randomized multistart search to approximately solve \eqref{prog:empc_def_apx}.  This is detailed in Algorithm \ref{alg:multistart}.
\begin{algorithm}[!h]
	Initialization:
	\begin{algorithmic}[1]
		\item Define feasible auxiliary solution $\action_{\text{aux}} = \uaux(X(t));$
		\item Initialize set of candidate solutions $\CandidateSolution \triangleq \{\action_{\text{aux}}\};$
		\item Define maximum iteration count $\maxiter;$
		\item Set $k = 0.$
		\item Run main program;
	\end{algorithmic}
	Main Program:
	\begin{algorithmic}[1]
		\While{$k \leq \maxiter$}
		\State Sample $\action$ from distribution with support $\ActionSet;$
		\If{$\action$ is feasible} \label{algstep:qstep1}
			\State $\IdxSet \triangleq \{i \in \Nodes \, | \, \action_i = 1\};$
			\If{$\action - e_i$ is feasible for some $i \in \IdxSet$} \label{algstep:search}
				\State $\action \leftarrow \action - e_i;$
				\State Go to \ref{algstep:qstep1};
			\Else
				\State $\CandidateSolution \leftarrow \CandidateSolution \cup \{\action\}.$
			\EndIf
		\Else
			\State Return $\action$ infeasible;
		\EndIf
		
		\State $k \leftarrow k + 1;$
		\EndWhile
		\State Return $\action^\star = \operatorname{argmin}_{\action \in \CandidateSolution} \{\Cost(\action)\};$
	\end{algorithmic}
	\caption{Multi-Start Local Descent}
	\label{alg:multistart}
\end{algorithm}

While in general it may take a very long time to find the optimal solution, it is theoretically guaranteed that Algorithm \ref{alg:multistart} will find the globally optimal solution of \eqref{prog:empc_def} eventually, and in finite time, so long as $\maxiter$ is set to infinity.  This is moreso a nice theoretical guarantee, than a practically important matter.  Of course, the amount of time required to find a feasible solution of a particular quality will be affected by the particular choice of sampling distribution, and it is likely that the optimal solution will not be found for quite some time.  We detail the particular sampling distribution used in our experiments in Section \ref{subsec:experiments}.

Perhaps most important in practice is that for any considered candidate action, Algorithm \ref{alg:multistart} will terminate at a locally optimal point after at most $O(n^2)$ operations, which can be proven formally by a simple counting argument.  This guarantees that at any given system state, our controller can do better than random guessing quickly, while not necessarily guaranteeing that the optimal action will be found.  Note that by no means is this the only sort of algorithm which can be used here.  Rather, this is the simplest approach which has been found to work well enough to be worth reporting here.  It is expected that in different application domains, researchers may want to investigate the efficacy of different approaches.  Those interested may want to read up on heuristic algorithms for integer programming problems (see, e.g., \cite{Blum2016,Hansen2001}).

\subsection{Numerical Experiments}
\label{subsec:experiments}
We simulate the evolution of the $SEIV$ process under the EMPC defined in Section \ref{subsec:empc}, using the optimization method defined in Section \ref{subsec:pred_quar} with decay rate $r = 0.07$ and $\delt = 0.375,$ where the candidate solutions for Algorithm \ref{alg:multistart} are chosen such that exposed and infected nodes are quarantined independently with probability $0.7,$ and susceptible and vigilant nodes are quarantined independently with probability $0.1.$  All numerical integrations are performed with Matlab's implementation of \texttt{ode45}.  We compare its performance against that of the total quarantine base heuristic given in Section \ref{subsec:total_quar}.  Representative results of our numerical study are given in Figure \ref{fig:sim_results}, which reports the result of a simulation of a $200$-node \Erdos-Reyni random graph with connection probability $0.6,$ and spreading parameters chosen as  $\alpha_i = 0.1,$ $\beta_{ij} = 0.1,$ $\gamma_{ij} = 0.1,$ $\delta_{i} = 1.25,$ $\eta_i = 3.5,$ and $\xi_i = 2,$ where the parameters are equivalued for all edges and nodes so as to be able to fully specify the problem considered.

From the convergence plot given in Figure~\ref{subfig:decay_test}, we see that while the approximations generated by the solutions of \eqref{eq:refined_dyn} are somewhat loose at the beginning of the simulation, they are highly nontrivial by themselves.  Moreover, the optimal upper-bound given by Theorem \ref{thm:opt_apx} improves the approximation further, decreasing the uncertainty of the approximation by a few units in some cases.  At their worst, the optimal bounds constrain the expectation of the process to within an interval of approximately $20$ nodes, corresponding to an uncertainty of approximately $10\%,$ with respect to the number of nodes in the graph.  While this suggests that an ideal controller may be able to attain better performance, it also suggests that possible improvements are limited.  As time passes, the quality of the approximation improves to the point where the upper- and lower- approximations converge, further exemplifying the approximation method's utility.

From the cost plot given in Figure~\ref{subfig:cost}, we can see that the cost of the controller is substantially reduced, with respect to a comparison against the total quarantine base policy.  To some extent, this provides a mathematical validation of the opinions expressed by the medical community, which suggest that quarantining individuals exposed to infectious diseases is \emph{not always} required for effective disease control \cite{Drazen2014}.  Of course, the $SEIV$ model we study here is perhaps too simple to say anything more concrete.

\begin{figure}[!t]
	\begin{subfigure}[b]{\linewidth} 
		\centering\large 
		\includegraphics[width=\textwidth]{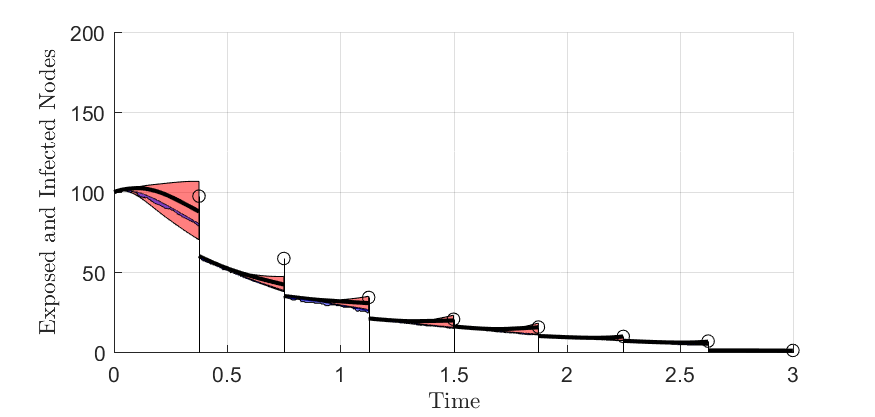}
		\caption{Total number of exposed and infected nodes in the spreading graph under the proposed controller as a function of time.  The shaded red regions give the bounds generated by propagating the dynamics \eqref{eq:refined_dyn}, the open circles indicate the upper-bound on the expected number of exposed and infected nodes guaranteed by the controller, the solid black line gives the optimal upper-bound as computed by Theorem \ref{thm:opt_apx}, the shaded blue regions give the $98\%$ confidence intervals generated from estimating the expectation of the process by producing $10k$ sample paths of the process via Monte Carlo simulation, and then computing $1k$ estimates of the mean via bootstrap sampling from the simulated trajectories.}\label{subfig:decay_test}
	\end{subfigure}
	\begin{subfigure}[b]{\linewidth} 
		\centering\large 
		\includegraphics[width=\textwidth]{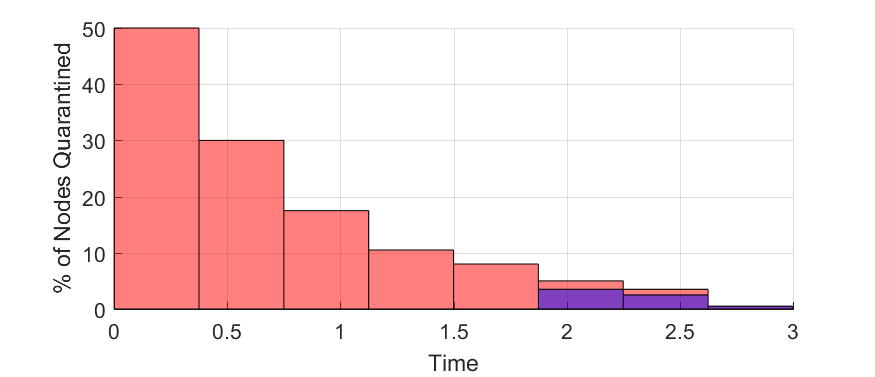}
		\caption{Fraction of nodes held in quarantine as a function of time.  The red regions indicate the total fraction of nodes quarantined under the total quarantine policy, the blue regions indicate the total fraction of nodes quarantined under the proposed controller.  In this particular simulation, a quarantine is not required at all until the number of infections in the graph is very small.} \label{subfig:cost}
		\label{fig:cost_test} 
	\end{subfigure}
	\caption{Plots numerically evaluating the performance of the proposed controller.  Figure \ref{subfig:decay_test} shows that the proposed controller induces exponential elimination of the epidemic.  Figure \ref{subfig:cost} demonstrates that the stochastic optimization method proposed in Algorithm \ref{alg:multistart} can significantly reduce the number of nodes required to be quarantine in order to guarantee the desired elimination rate.} \label{fig:sim_results}
\end{figure}

\section{Summary and Future Work} 
\label{sec:summary}

In this paper, we have developed a robust economic model predictive controller for the mitigation of diseases modeled by $SEIV$ processes.  In addressing this problem, we provided a novel robust moment closure technique, which guarantees that the engendered approximations always give rigorous, nontrivial upper- and lower- bounds of the processes' compartmental membership probabilities.  We have also shown how to analyze the convergence properties of the proposed EMPC in order to guarantee that the elimination time of the process is appropriately small.  In so doing, we have provided a useful step in the process to developing control techniques for continuous-time stochastic networked epidemics which provide statistical guarantees about the evolution of the process.  Still, there is much work to be done.

Perhaps an obvious criticism of the basic $SEIV$ model is that - due to modeling the underlying contact and compartment transition processes as Poisson processes - the holding time distributions for each node's compartmental memberships are constricted to be exponential, which may not be reflective of what is seen in actual diseases.  As such, it would be of particular interest to extend the analysis of the process presented in this paper to a non-Markovian setting, such that non-exponential holding time distributions can be accommodated.  It is also of interest to determine which types of optimization methods work best for controllers designed for applications in different domains, where the objectives and epidemic models may be different.  In particular, it seems of interest to extend the framework presented here to the types of problems which have been studied exclusively in the mean-field regime.  This paper provides a step in the direction of solving such problems; we believe that continued interest from the community will lead to interesting solutions.

\appendix
\subsection{Derivation of $SEIV$ Expectation Dynamics}
\label{app:dynamics}

Since the integrands of the \Ito integrals of \eqref{eq:SEIV_process} are always finite, the process is square integrable.  As such, a consequence of \Itos lemma is that the expectation operator and the \Ito integral commute (see, e.g., \cite[Theorem 3.20]{Hanson2007}), and the expectation of the probability measures of the Poisson processes become the rates of the process.  Carrying this computation through, we may take the expectation of both sides of \eqref{eq:SEIV_process} to arrive at the integral equations
\begin{equation}
\label{eq:SEIV_integrals}
\begin{aligned}
&\E[X_i^{S}(t)] =\int_{0}^{t} \E[X_i^V(h)] \alpha_i - \E[X_i^{S}(h)] \xi_i\\
&\hspace{40 pt}- \sum_{j \in \Neighbors_i}( \E[X_i^S X_j^E(h)] \beta_{ij} + \E[X_i^S X_j^I(h)] \gamma_{ij}) \deriv h,\\
&\E[X_i^{E}(t)] = \int_{0}^{t} \sum_{j \in \Neighbors_i} (\E[X_i^S X_j^E(h)] \beta_{ij} + \E[X_i^SX_j^I(h)] \gamma_{ij}) \\
&\hspace{40 pt} - \E[X_i^E(h)] \delta_{i} \deriv h,\\
&\E[X_i^{I}(t)] = \int_{0}^{t} \E[X_i^E(h)] \delta_{i} - \E[X_i^I(h)] \eta_i \deriv h,\\
&\E[X_i^{V}(t)] =  \int_{0}^{t} \E[X_i^I(h)] \eta_{i} + \E[X_i^{S}(h)] \xi_i \\
&\hspace{40 pt}- \E[X_i^{V}(h)] \alpha_i \deriv h.
\end{aligned}
\end{equation}
Note that only ordinary Riemann integrals remain in \eqref{eq:SEIV_integrals}, so we may apply the fundamental theorem of calculus (see, e.g., \cite[Theorem 6.20]{Rudin1976}) to arrive at the system of ordinary, nonlinear differential equations \eqref{eq:SEIV_diff}. \proofend

\subsection{Two-Sided Multivariate Comparison Lemma}
\label{app:lem:comparison}
\begin{lem}[Two-Sided Multivariate Comparison Lemma] 
	\label{lem:comparison}
	Consider a system of differential equations
	\begin{equation}
	\dot{x} = f(x)
	\end{equation}
	with $x \in \real^p,$ $f : \real^p \rightarrow \real^p,$ and possessing a unique, continuously differentiable solution $x(t).$  Suppose $\tilde{f}$ and $\utilde{f}$ are Lipschitz continuous vector functions defined on $\real^{p \times 2},$ where for each component $i,$
	\begin{equation}
	\label{ineq:dyn_upper_bound}
	f_i(z) \leq \tilde{f}_i(\utilde{z},\tilde{z})\\
	\end{equation}
	holds everywhere on the set
	\begin{equation*}
	\tilde{\mathcal{Z}}_i \triangleq \{(\utilde{z}, z, \tilde{z}) \in \real^{(p \times 3)} \, | \,  \utilde{z}_i \leq z_i = \tilde{z}_i, \utilde{z}_j \leq z_j \leq \tilde{z}_j, \forall j \neq i\},
	\end{equation*}
	and the inequality  
	\begin{equation}
	\label{ineq:dyn_lower_bound}
	\utilde{f}_i(\utilde{z},\tilde{z}) \leq f_i(z)
	\end{equation}
	holds everywhere on the set
	\begin{equation*}
	\utilde{\mathcal{Z}}_i \triangleq \{(\utilde{z}, z, \tilde{z}) \in \real^{(p \times 3)} \, | \,  \utilde{z}_i = z_i \leq \tilde{z}_i, \utilde{z}_j \leq z_j \leq \tilde{z}_j, \forall j \neq i\}.
	\end{equation*}
	Then, the solutions to the system
	\begin{equation}
	\label{eq:f_robust_dyn}
	\begin{aligned}
	\dot{\tilde{x}}= \tilde{f}(\utilde{x},\tilde{x}),\\
	\dot{\utilde{x}} = \utilde{f}(\utilde{x},\tilde{x}),
	\end{aligned}
	\end{equation}
	with initial conditions $\utilde{x}(0) = x(0) = \tilde{x}(0)$, satisfy
	\begin{equation}
	x_i(t) \in [\utilde{x}_i(t), \tilde{x}_i(t)]
	\end{equation}
	for all $i$ and $t \geq 0.$ 
\end{lem}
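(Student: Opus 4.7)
The plan is to adapt the classical perturbation-based proof of the single-variable comparison lemma to this multivariate setting. The key observation is that the hypotheses \eqref{ineq:dyn_upper_bound} and \eqref{ineq:dyn_lower_bound} have been carefully tailored so that they only need to hold when the relevant component is pinned to equality and all other components are sandwiched, exactly the configuration one expects at a first crossing time.

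Concretely, I would first introduce, for each $\epsilon > 0,$ the perturbed system
\begin{equation*}
\dot{\tilde{x}}^{\epsilon}_i = \tilde{f}_i(\utilde{x}^{\epsilon},\tilde{x}^{\epsilon}) + \epsilon, \qquad \dot{\utilde{x}}^{\epsilon}_i = \utilde{f}_i(\utilde{x}^{\epsilon},\tilde{x}^{\epsilon}) - \epsilon,
\end{equation*}
with common initial condition $\tilde{x}^{\epsilon}(0) = \utilde{x}^{\epsilon}(0) = x(0);$ by the Lipschitz hypothesis this system has a unique continuously differentiable solution on any finite interval. I would then claim that $\utilde{x}^{\epsilon}_i(t) < x_i(t) < \tilde{x}^{\epsilon}_i(t)$ for every $i$ and every $t > 0.$ To prove this, let $\tau$ denote the infimum of times $t > 0$ at which at least one of these strict inequalities fails. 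At $t = 0$ the triple $(\utilde{x}^{\epsilon}(0), x(0), \tilde{x}^{\epsilon}(0))$ lies in $\tilde{\mathcal{Z}}_i \cap \utilde{\mathcal{Z}}_i$ for every $i$ (all coordinates agree), so the hypotheses immediately give $\dot{\tilde{x}}^{\epsilon}_i(0) - \dot{x}_i(0) \geq \epsilon > 0$ and $\dot{x}_i(0) - \dot{\utilde{x}}^{\epsilon}_i(0) \geq \epsilon > 0,$ hence strict separation holds on a small initial interval and $\tau > 0.$ Supposing for contradiction that $\tau < \infty,$ continuity forces equality in at least one inequality at time $\tau.$ Treat the case $\tilde{x}^{\epsilon}_i(\tau) = x_i(\tau)$ (the other case is symmetric): by definition of $\tau,$ all other components still satisfy $\utilde{x}^{\epsilon}_j(\tau) \leq x_j(\tau) \leq \tilde{x}^{\epsilon}_j(\tau),$ so the triple $(\utilde{x}^{\epsilon}(\tau), x(\tau), \tilde{x}^{\epsilon}(\tau))$ lies in $\tilde{\mathcal{Z}}_i.$ Applying \eqref{ineq:dyn_upper_bound} and the $+\epsilon$ perturbation then yields $\dot{\tilde{x}}^{\epsilon}_i(\tau) - \dot{x}_i(\tau) \geq \epsilon > 0.$ However, the function $\tilde{x}^{\epsilon}_i - x_i$ is strictly positive on $(0,\tau)$ and vanishes at $\tau,$ which forces its derivative at $\tau$ to be nonpositive. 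This contradiction shows $\tau = \infty.$

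Finally, I would invoke continuous dependence of ODE solutions on the right-hand side to pass to the limit $\epsilon \to 0,$ obtaining the weak sandwich $\utilde{x}_i(t) \leq x_i(t) \leq \tilde{x}_i(t)$ for all $i$ and all $t \geq 0,$ which is the claim. I expect the main obstacle to be essentially bookkeeping: confirming that at the putative crossing time $\tau,$ regardless of how many components hit equality simultaneously, one can always pick an index $i$ whose equality places the triple inside one of the designated sets $\tilde{\mathcal{Z}}_i$ or $\utilde{\mathcal{Z}}_i.$ Because those sets only require weak sandwiching of the off-index components, this check should proceed cleanly even in the degenerate case when several bounds coincide with $x$ at the same time.
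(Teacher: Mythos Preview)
Your proposal is correct and follows the classical perturbation route, whereas the paper argues directly by contradiction at a first crossing time without introducing any $\epsilon$-perturbation. Concretely, the paper supposes the sandwich fails, picks the earliest failure time, and asserts the existence of an earlier time $t_c^-$ at which equality holds in the offending component together with a \emph{strict} derivative inequality $\dot{x}_i(t_c^-) > \dot{\tilde{x}}_i(t_c^-)$; this is then contradicted by the hypothesis \eqref{ineq:dyn_upper_bound}. Your approach instead proves strict separation for the perturbed system and passes to the limit via continuous dependence. The advantage of your route is that the strict derivative inequality at the crossing time is manufactured by the $+\epsilon$ shift, so you never have to justify why a function that is positive on $(0,\tau)$ and zero at $\tau$ must have had a strictly positive derivative somewhere it last touched zero---a step the paper's direct argument takes for granted and which, as stated, only yields $\dot{x}_i - \dot{\tilde{x}}_i \geq 0$ rather than $> 0$. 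The cost is the extra limiting step and the appeal to continuous dependence, which the paper avoids entirely.
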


\begin{proof}
	The proof proceeds by a sequence of contradiction arguments, which in particular use the fact that the solutions $\utilde{x}(t),$ $x(t),$ and $\tilde{x}(t)$ are continuously differentiable along with the inequalities \eqref{ineq:dyn_lower_bound} and \eqref{ineq:dyn_upper_bound} in order to demonstrate that the inclusions $x_i(t) \in [\utilde{x}_i(t), \tilde{x}_i(t)]$ hold for all $i \in [p],$ and all times $t \geq 0.$  Note that the continuous differentiability of the solutions follows immediately from Lipschitz continuity of the dynamics (see, e.g., \cite{Khalil2002}).
	
	Suppose for purposes of contradiction that $\utilde{x}(t) \curlyleq x(t) \curlyleq \tilde{x}(t)$ does not hold for all time.  That then implies that there is some $t$ at which either $x_i(t) > \tilde{x}_i(t)$ or $x_i(t) < \utilde{x}_i(t)$ occurs.  Let $\contratime$ be the first time at which such an event occurs.  Suppose for now that $x_i(\contratime) > \tilde{x}_i(\contratime)$ occurs; we argue the other case analogously.  By the continuity and differentiability of $x_i$ and $\tilde{x}_i,$ it then also holds that there exists some time $\precontratime$ such that $\precontratime < \contratime,$ $x_i(\precontratime) = \tilde{x}_i(\precontratime),$ and $\dot{x}_i(\precontratime) > \dot{\tilde{x}}_i(\precontratime).$  However, since it holds that $\utilde{x}_i(\precontratime) \leq x_i(\precontratime) = \tilde{x}_i(\precontratime),$ and we have that $\utilde{x}_j(\precontratime) \leq x_j(\precontratime) \leq \tilde{x}_j(\precontratime),$ we have by assumption that $\dot{x}_i(\precontratime) \leq \dot{\tilde{x}}_i(\precontratime).$  This is a contradiction.  The case in which $x_i(\contratime) < \utilde{x}_i(\contratime)$ occurs can be handled by similar arguments.  This completes the proof.
\proofend
\end{proof}

\subsection{Proof that the dynamics \eqref{eq:SEIV_frechet_apx_dyn} satisfy Lemma \ref{lem:comparison}}
\label{app:veracity}
	Note that by using a construction such as \cite[Section 5]{Sahneh2013}, the $SEIV$ process can be represented as a $4^n$-dimensional time-homogeneous Markov process for fixed set of spreading parameters.  As such, the compartmental membership probabilities generated from any particular initial state $X$ can be represented as sums of states evolving as solutions to a $4^n$-dimensional linear system, and so are unique and continuously differentiable.  Likewise, since each term of the dynamics \eqref{eq:SEIV_frechet_apx_dyn} is a sum of Lipschitz continuous functions, it follows that the dynamics \eqref{eq:SEIV_frechet_apx_dyn} are also Lipschitz continuous.  It remains to verify that the inequalities required by Lemma \ref{lem:comparison} are satisfied on the appropriate subsets of the state space.
	
	Choose some node label $i \in \Nodes$ and some compartmental label $C \in \mathcal{L}.$  We wish to show that
	\begin{equation} \label{ineq:proof1}
		\max_{\xi \in \Xi(x)} \left\{\ddtop{\E_{\xi}[X_i^C]} \right\} \leq \dot{\tilde{x}}_i^C(\utilde{x},\tilde{x})
	\end{equation}
	holds everywhere on
	\begin{equation*}
		\begin{aligned}
		&\tilde{\mathcal{X}}_i^C \triangleq \{(\utilde{x},x,\tilde{x}) \in \real^{(\Nodes \times \mathcal{L}) \times 3} \\
		& \hspace{10 pt} | \, \utilde{x}_i^C \leq x_i^{C} = \tilde{x}_i^C, \utilde{x}_j^{C^\prime} \leq x_j^{C^\prime} \leq \tilde{x}_j^{C^\prime}, (j,C^\prime) \in \Nodes \times \mathcal{L} \},
		\end{aligned}
	\end{equation*}
	where the set $\Xi(x)$ is the set of all probability measures with first moments $x_i^{C} = \E_{\xi}[X_i^{C}],$ and where the term $\dot{\tilde{x}}_i^C(\utilde{x},\tilde{x})$ is a shorthand reference to the dynamics \eqref{eq:SEIV_frechet_apx_dyn}. Consider all terms of the function $\ddtop{\E_{\xi}[X_i^C]}$ with positive coefficients; they can be written as $\kappa \E_{\xi}[X_i^{C} X_j^{C^\prime}]$ for some $\kappa \geq 0.$  Each is bounded above by $\FrechetUpper(\tilde{x}^{C}_i,\tilde{x}^{C^\prime}_{j})$ by the \Frechet inequality \eqref{ineq:frechet} for any measure with expectation $x.$  Likewise, consider all terms of the function $\ddtop{\E_{\xi}[X_i^C]}$ with negative coefficients; they can be written as $\kappa \E_{\xi}[X_i^{C} X_j^{C^\prime}]$ for some $\kappa \leq 0.$  From the \Frechet inequality \eqref{ineq:frechet}, we have $\kappa \FrechetLower(\tilde{x}_i^{C},\utilde{x}_j^{C^\prime}) \geq \kappa \E_{\xi}[X_i^{C} X_j^{C^\prime}],$ so long as $\tilde{x}_i^{C} = \E_{\xi} [X_i^C],$ which is precisely the case for points on $\tilde{\mathcal{X}}_i^C.$  Hence, the inequality \eqref{ineq:proof1} holds on $\tilde{\mathcal{X}}_i^C,$ as claimed.  We can show that the inequality
	\begin{equation*}
		 \dot{\utilde{x}}_i^C(\utilde{x},\tilde{x}) \leq \min_{\xi \in \Xi(x)} \left\{\ddtop{\E_{\xi}[X_i^C]} \right\}
	\end{equation*}
	holds everywhere on the set
	\begin{equation*}
	\begin{aligned}
	&\utilde{\mathcal{X}}_i^C \triangleq \{(\utilde{x},x,\tilde{x}) \in \real^{(\Nodes \times \mathcal{L}) \times 3} \\
	& \hspace{10 pt} | \, \utilde{x}_i^C = x_i^{C} \leq \tilde{x}_i^C, \utilde{x}_j^{C^\prime} \leq x_j^{C^\prime} \leq \tilde{x}_j^{C^\prime}, (j,C^\prime) \in \Nodes \times \mathcal{L} \}
	\end{aligned}
	\end{equation*}
	by similar arguments, which completes the proof.
	\proofend

\subsection{Proof of Proposition \ref{prop:tighter}}
\label{app:prop:tighter}

This result relies directly on a well-known comparison lemma from the theory of monotone dynamical systems, the Kamke-\Muller lemma (see, e.g. \cite{Smith2004}).  In its most basic form, the Kamke-\Muller lemma gives comparisons between the solutions of one dynamical system, evolving from two distinct initial conditions which satisfy some ordering.  However, it is well known that this type of comparison can be made for the solutions of two distinct dynamical systems, provided certain ordering conditions hold.  Specifically, the result we use in this paper is stated as follows, where we have adapted material from~\cite[Section 3.1]{Smith2004} to the notation used here:
\begin{lem}[Extended Kamke-\Muller Lemma]
	\label{lem:kamke-muller}
	Let $f$ and $g$ be Lipschitz continuous vector functions on $\real^p,$ and consider the nonlinear dynamical system
	\begin{equation*}
	\begin{aligned}
	\dot{y} &= f(y)\\
	\dot{z} &= g(z).
	\end{aligned}
	\end{equation*}
	If for each $i,$ the inequality $f_i(\ubar{q}) \leq g_i(\bar{q})$ holds on
	$$\mathcal{Q}_i \triangleq \left\{(\ubar{q}, \bar{q}) \in \real^{p \times 2} \, | \, \ubar{q}_i = \bar{q}_i, \ubar{q}_j \leq \bar{q}_j, j \neq i \right\},$$
	and it holds that $y(t_0) = z(t_0),$ then it also holds that $y_i(t) \leq z_i(t)$ for all $t \geq t_0,$ and all $i \in [p].$
\end{lem}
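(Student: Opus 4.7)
The plan is to prove the lemma by a perturbation argument combined with a first-crossing contradiction. The Lipschitz continuity of $f$ and $g$ guarantees that both $y$ and $z$ exist, are unique, and are continuously differentiable on their maximal intervals of existence (by the Picard-Lindel{\"o}f theorem), and moreover that solutions depend continuously on both initial data and on any additive perturbation of the vector field. This regularity is what makes the contradiction argument at a single instant meaningful.

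First I would establish the conclusion under a strict-inequality version of the hypothesis. For small $\epsilon > 0$, I would introduce a perturbed system $\dot{z}^\epsilon = g(z^\epsilon) + \epsilon \mathbf{1}$ with initial condition $z^\epsilon(t_0) = y(t_0) + \epsilon \mathbf{1}$, so that $y_i(t_0) < z_i^\epsilon(t_0)$ strictly for every $i \in [p]$. Under the original hypothesis, on $\mathcal{Q}_i$ one has $f_i(\ubar{q}) \leq g_i(\bar{q}) < g_i(\bar{q}) + \epsilon$, which is a strict comparison between the two vector fields on the relevant contact set.

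Next, I would show via contradiction that $y_i(t) < z_i^\epsilon(t)$ for all $t \geq t_0$ and every $i$. If this were to fail, define the first-crossing time $t_c \triangleq \inf\{t \geq t_0 \, | \, y_i(t) \geq z_i^\epsilon(t) \text{ for some } i\}$. By continuity $t_c > t_0$, and at $t_c$ there must exist some index $i$ with $y_i(t_c) = z_i^\epsilon(t_c)$ while $y_j(t_c) \leq z_j^\epsilon(t_c)$ for all $j \neq i$, so the pair $(y(t_c), z^\epsilon(t_c))$ lies in $\mathcal{Q}_i$. The strict comparison then gives $\dot{y}_i(t_c) = f_i(y(t_c)) < g_i(z^\epsilon(t_c)) + \epsilon = \dot{z}_i^\epsilon(t_c)$, which by continuous differentiability forces $y_i < z_i^\epsilon$ on some right neighborhood of $t_c$, contradicting the definition of $t_c$ as a first-crossing time.

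Finally I would take $\epsilon \to 0^+$. By continuous dependence of ODE solutions on initial data and on additive forcing (which follows from Lipschitz continuity of $g$ via a standard Gronwall estimate), $z^\epsilon(t) \to z(t)$ uniformly on any compact interval $[t_0, T]$. Passing to the limit in the strict inequality $y_i(t) < z_i^\epsilon(t)$ yields $y_i(t) \leq z_i(t)$ for all $i \in [p]$ and all $t \geq t_0$, as claimed. The main obstacle is precisely this handling of the non-strict hypothesis: attempting the naive first-crossing argument directly at $\epsilon = 0$ fails because the comparison on derivatives degenerates at a touching time, and trajectories can tangentially touch without actually crossing. The perturbation step converts the weak hypothesis into a strict one just long enough to run the crossing argument, after which the Gronwall-type continuity estimate recovers the non-strict conclusion in the limit.
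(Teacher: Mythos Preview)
The paper does not actually prove this lemma. It is stated as a known result from the theory of monotone dynamical systems, with the proof deferred to \cite[Section 3.1]{Smith2004}; the surrounding text merely adapts the statement to the paper's notation and then invokes it. So there is no paper proof to compare against in the strict sense.

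Your argument is correct and is the standard route: perturb to a strict inequality, run the first-crossing contradiction, and close with continuous dependence via Gronwall. One small imprecision is worth tightening. At the first-crossing time $t_c$ you conclude that $\dot y_i(t_c) < \dot z_i^\epsilon(t_c)$ and then say this forces $y_i < z_i^\epsilon$ on a \emph{right} neighborhood of $t_c$, ``contradicting the definition of $t_c$ as a first-crossing time.'' That is not where the contradiction lives: $t_c$ being an infimum says nothing about what happens to the right. The contradiction comes from the left. Since $y_i - z_i^\epsilon$ is strictly negative on $[t_0,t_c)$ and vanishes at $t_c$, its derivative at $t_c$ must be nonnegative (this is where $C^1$ regularity is used), which clashes with the strict negativity you derived. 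With that correction the argument is airtight.

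It is also worth noting that the paper's own proof of the closely related Lemma~\ref{lem:comparison} attempts exactly the naive first-crossing argument you warn against, without a perturbation step, and so is vulnerable to the tangential-touching objection you raise. Your treatment is in that respect more careful than the paper's handling of its analogous comparison result.
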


Now, consider taking $y^T = [\bar{x}^T,-\ubar{x}^T]^T,$ and $z^T = [\tilde{x}^T,-\utilde{x}^T]^T,$ and let $p = 4n,$ where we let each $j \in [4n]$ represent exactly one $(i,C) \in \Nodes \times \mathcal{L}.$  If for each $(i,C) \in \Nodes \times \mathcal{L},$ we have that $[\ubar{f}_i^C(\ubar{q}),\bar{f}_i^C(\ubar{q})] \subseteq [\utilde{f}_i^C(\bar{q}), \tilde{f}_i^C(\bar{q})],$ on the corresponding set $\mathcal{Q}_j,$ and $(\ubar{x}(0),\bar{x}(0)) = (\utilde{x}(0),\tilde{x}(0)),$ Lemma \ref{lem:kamke-muller} implies $[\ubar{x}_i^C(t), \bar{x}_i^C(t)] \subseteq [\utilde{x}_i^C(t), \tilde{x}_i^C(t)]$ for all $(i,C) \in \Nodes \times \mathcal{L},$ and for all $t \geq 0,$ as claimed. \proofend

\subsection{Proof of Theorem \ref{thm:refined}}
\label{app:thm:refined}
It is easy to check that for all nodes $i,$ and all compartmental labels $C \in \mathcal{L} \triangleq \{S,E,I,V\},$ we have that
\begin{equation*}
	[\ubar{f}_i^C(\ubar{x},\bar{x}),\bar{f}_i^C(\ubar{x},\bar{x})] \subseteq [\utilde{f}_i^C(\utilde{x},\tilde{x}), \tilde{f}_i^C(\utilde{x},\tilde{x})],
\end{equation*}
holds everywhere on $\bar{\mathcal{X}}_i^C \cup \ubar{\mathcal{X}}_i^C$ as defined in Proposition \ref{prop:tighter}, which then gives that $[\ubar{x}_i^C(t),\bar{x}_i^C(t)] \subseteq [\utilde{x}_i^C(t),\tilde{x}_i^C(t)]$ holds for any initial condition $\utilde{x}(0) = \tilde{x}(0) = \ubar{x}(0) = \bar{x}(0) = x(0),$ all $t \geq 0,$ and all $(i,C) \in \Nodes \times \mathcal{L}.$  Since including the complement bounding operators still gives valid over- and under-approximations of the terms of $\ddtop{\E[X_i^C]}$ on the relevant subsets of the state space, this certifies that the system \eqref{eq:refined_dyn} generates solutions which satisfy $\E[X_i^C (t)] \in [\ubar{x}_i^C(t), \bar{x}_i^C(t)]$ for all $t \geq 0,$ and all $(i,C) \in \Nodes \times \mathcal{L}.$

To demonstrate that $[\ubar{x}(t), \bar{x}(t)] \subseteq [0,1]^{\Nodes \times \mathcal{L}}$ holds for all $t,$ we make an appeal to the continuity of the solutions $(\ubar{x}(t),\bar{x}(t))$ as well the values of $\dot{\ubar{x}}$ and $\dot{\bar{x}}$ on the sets of states $(\ubar{x}, \bar{x}),$ such that $\ubar{x}_i^{C} = 0$ or $\bar{x}_i^{C} = 1$ for some $i$ or $C.$  Suppose for purposes of contradiction that $\ubar{x}_i^C(\contratime) < 0$ for some time $\contratime.$  By continuity of $\ubar{x}_i^C,$ it must then be the case that there exists some time $\precontratime < \contratime$ such that $\ubar{x}_i^{C}(\precontratime) = 0$ and $\dot{\ubar{x}}_i^{C}(\precontratime) < 0.$  However, evaluating the expression for $\dot{\ubar{x}}_i^{C}$ with $\ubar{x}_i^{C} = 0,$ has $\dot{\ubar{x}}_i^{C} \geq 0.$  This is a contradiction, and as such, proves that $\ubar{x}_i^C(t) \geq 0$ for all $t \geq 0.$ 

Suppose now that $\bar{x}_i^C(\contratime) > 1$ for some finite time $\contratime.$  By continuity of $\bar{x}_i^C(\contratime),$ it must then be the case that there exists some time $\precontratime < \contratime$ such that $\bar{x}_i^{C}(\precontratime) = 1$ and $\dot{\bar{x}}_i^{C}(\precontratime) > 0.$  However, evaluating the expression for $\dot{\bar{x}}_i^{C}$ with $\bar{x}_i^{C} = 1,$ has $\dot{\bar{x}}_i^{C} \leq 0.$  This is a contradiction, and as such, proves that $\bar{x}_i^C(t) \leq 1$ for all $t \geq 0.$  Since each component of the dynamics \eqref{eq:refined_dyn} is a sum of Lipschitz continuous functions, they are Lipschitz continuous. This completes the proof. \proofend

\subsection{Proof of Theorem \ref{thm:opt_apx}}
\label{app:thm:opt_apx}

Let  $\Dists_{\Graph}$ be the set of all possible marginal compartmental membership probabilities for the graph, i.e.
\begin{equation}
\Dists_{\Graph} \triangleq \left\{y \in \realnonnegative^{\Nodes \times \mathcal{L}} \, | \, \sum_{C \in \mathcal{L}} y_i^C = 1, \forall \, i \in \Nodes \right \}
\end{equation}
Define $\Lambda_{t}$ as the  set of marginal compartmental membership probabilities permitted by the approximations generated by integrating the dynamics \eqref{eq:refined_dyn} with initial condition $\ubar{x}(0) = X(0) = \bar{x}(0)$ over the interval $[0, t],$ i.e.
\begin{equation*}
\Lambda_{t} \triangleq \left \{y \in \Dists_{\Graph} | y_i^{C} \in [\ubar{x}_i^C(t),\bar{x}_i^C(t)], \forall i \in \Nodes, C \in \mathcal{L}  \right \}.
\end{equation*}

Since $\ell(X)$ is a sum of indicator random variables, it follows that $\E[\ell(X(t)) | X(0)]$ can take the value $\ell(y)$ for any $y \in \Lambda_t.$  By maximizing over all such $y,$ it follows that $\E[\ell(X(t)) | X(0)] \leq \max_{y \in \Lambda_{t}} \{\ell(y)\}$ holds, and is tight because $y \in \Lambda_t.$  It remains to show that $\max_{y \in \Lambda_{t}} \{\ell(y)\}$ evaluates to the right hand side of \eqref{ineq:opt_bound}. 

To do this, we note that $\max_{y \in \Lambda_{t}} \{\ell(y)\}$ can be decomposed as the sum of the $n$ linear programs
\begin{subequations}
	\label{prog:max_ell}
	\begin{align}
	& \underset{y_i \in \realnonnegative^{\mathcal{L}} }{\text{maximize}}
	& & y_i^E + y_i^I  \\
	& \text{subject to}
	& & y_i^C \in [\ubar{x}_i^C,\bar{x}_i^C], \forall i \in \Nodes, C \in \mathcal{L} \label{con:interval}\\
	&&& \sum_{C \in \mathcal{L}} y_i^C = 1 \label{eq:sum_con}
	\end{align}
\end{subequations}
We now solve \eqref{prog:max_ell} analytically.  By considering the constraint \eqref{eq:sum_con}, we have that at all feasible points of \eqref{prog:max_ell} satisfy $y_i^E + y_i^I = 1 - y_i^{S} - y_i^{V}.$  As such, if $\bar{x}_i^E + \bar{x}_i^{I}$ and $1 - \ubar{x}_i^{V} + \ubar{x}_i^{S}$ take distinct values, only the smaller of the two values is attainable on the feasible polytope.  As the objective is monotonically increasing in $y_i^E$ and $y_i^I,$ it then follows that the optimal value of \eqref{prog:max_ell} is bounded above by $\min\{\bar{x}_i^E + \bar{x}_i^I, 1- \ubar{x}_i^{V} - \ubar{x}_i^{S} \}.$  Noting that the program \eqref{prog:max_ell} is guaranteed to be feasible since the interval constraints \eqref{con:interval} are generated so as to contain the true underlying marginal probabilities, we can finish the argument by showing that there exists a feasible point which attains the value 
$\min\{\bar{x}_i^E + \bar{x}_i^I, 1- \ubar{x}_i^{V} - \ubar{x}_i^{S} \}.$

Suppose that $\bar{x}_i^E + \bar{x}_i^I > 1 - \ubar{x}_i^V - \ubar{x}_i^S.$  It follows that $\bar{x}_i^E + \bar{x}_i^I + \ubar{x}_i^V + \ubar{x}_i^S > 1,$ from which feasibility of \eqref{prog:max_ell} and the fact that $y_i^S$ and $y_i^V$ are lower bounded by $\ubar{x}_i^S$ and $\ubar{x}_i^{V}$ (respectively) implies that there exists a $y_i$ such that $y_i^E + y_i^I = 1 - \ubar{x}_i^V + \ubar{x}_i^S.$  This point attains the value $\min\{\bar{x}_i^E + \bar{x}_i^I, 1- \ubar{x}_i^{V} - \ubar{x}_i^{S} \}.$  The case in which $\bar{x}_i^E + \bar{x}_i^I \leq 1 - \ubar{x}_i^V - \ubar{x}_i^S$ can be handled by similar arguments.  This ends the proof. \proofend

\subsection{Proof of Theorem \ref{thm:online}}
\label{app:thm:online}
Our argument proceeds by demonstrating that the sequence of controls realized by the proposed EMPC method induce the desired decay property.  Principally, the proof relies on an induction, and an application of the tower property of conditional expectations \cite[Proposition 13.2.7]{Rosenthal2013}.  

We have by construction that the inequality
\begin{equation}
	\E_{\uaux}[\InfectExposed(X(t + \delt)) | X(t)] \leq \InfectExposed(X(t)) \me^{-r \delt}
\end{equation}
holds.  Since the optimization routine will only pass back solutions which satisfy the stability constraint, as it will at worst pass back the action taken by the auxiliary policy, we have that
\begin{equation}
\E_{\umpc}[\InfectExposed(X(t + \delt)) | X(t)] \leq \InfectExposed(X(t)) \me^{-r \delt}
\end{equation}
holds as well.  We use this in an induction to prove the inequality demanded by the theorem's statement.  Take
\begin{equation}
	\E_{\umpc}[\InfectExposed(X(\delt))|X(0)] \leq \InfectExposed(X(0)) \me^{-r \delt}
\end{equation}
as a base for induction, and for an induction hypothesis that
\begin{equation}
	\label{ineq:induct_hyp}
	\E_{\umpc} [\InfectExposed(X(k \delt))|X(0)] \leq \InfectExposed(X(0)) \me^{-r k \delt}
\end{equation}
holds for some arbitrary positive integer $k.$  We now show that this implies that
\begin{equation*}
	\E_{\umpc}[\InfectExposed(X( (k+1) \delt)) | X(0)] \leq \InfectExposed(X(0)) \me^{-r (k+1) \delt}.
\end{equation*}

By expanding the conditioning in accordance with the tower property of conditional expectation \cite[Proposition 13.2.7]{Rosenthal2013}, we have that the identity
\begin{equation}
	\label{eq:induct_ident}
	\begin{aligned}
	&\E_{\umpc}[\InfectExposed(X((k+1)\delt)) | X(0)]\\
	&\hspace{20pt} =\E_{\umpc}[\E[\InfectExposed(X((k+1) \delt))| X(k \delt), X(0)]
	\end{aligned}
\end{equation}
holds.  From definition, we have the inequality
\begin{equation*}
	\E_{\umpc}[\InfectExposed(X((k+1) \delt))| X(k \delt), X(0)] \leq \InfectExposed(X(k \delt)) \me^{-r \delt},
\end{equation*}
which when applied to the identify \eqref{eq:induct_ident} gives
\begin{equation*}
	\begin{aligned}
	&\E_{\umpc}[\E[\InfectExposed(X((k+1) \delt))| X(k \delt), X(0)] ] \\
	&\hspace{85 pt} \leq \E_{\umpc}[ \InfectExposed(X(k \delt))| X(0)] \me^{-r \delt}.
	\end{aligned}
\end{equation*}
Now, the induction hypothesis \eqref{ineq:induct_hyp} yields the inequality
\begin{equation*}
	\E_{\umpc}[\InfectExposed(X((k+1) \delt)) | X(0)] \me^{-r \delt} \leq \InfectExposed(X(0)) \me^{-r (k+1) \delt},
\end{equation*}
which shows that \eqref{ineq:alg1_decay_samples} holds for any $t \in \SamplingTimes.$  Noting that this final inequality is tight in the case where \eqref{ineq:e_decay} is tight at every sampling time completes the proof. \proofend

\subsection{Proof of Theorem \ref{thm:total_quar}}
\label{app:thm:total_quar}
The essence of this proof is in demonstrating that the total quarantine policy induces sufficient negative drift in the process so as to guarantee the expectation decay stated in the theorem's hypothesis.  To accomplish this, we analyze the evolution of the upper-bounds on the compartmental membership probabilities for node $i$ and compartments $E$ and $I.$  Writing their dynamics down from \eqref{eq:SEIV_frechet_apx_dyn} with $\beta_{ij}$ and $\gamma_{ij}$ set to $0$ for all $j,$ we have that the linear system
\begin{equation}
	\label{eq:EI_apx}
	\begin{aligned}
	\dot{\bar{x}}_i^{E} &= - \delta_i \bar{x}_i^E,\\
	\dot{\bar{x}}_i^{I} &= \delta_i \bar{x}_i^E - \eta_{i} \bar{x}_i^I, \\
	\end{aligned}
\end{equation}
describes the behavior of the approximating system under the total quarantine policy.  Using standard solution techniques from the theory of linear ordinary differential equations to solve \eqref{eq:EI_apx} with initial conditions $\bar{x}^E(t)$ and $\bar{x}^I(t)$ and $\eta_i,$ $\delta_i$ distinct gives the solutions:
\begin{equation*}
	\begin{aligned}
	&\bar{x}_i^I(\delt|t) + \bar{x}_i^E (\delt|t) =  \\
	&\bar{x}_i^I(t) \me^{- \eta_i \delt} + \frac{\eta_i}{\eta_i - \delta_i} \bar{x}_i^{E}(t) \me^{- \delta_i \delt} - \frac{\delta_i}{\eta_i - \delta_i} \bar{x}_i^{E}(t) \me^{- \eta_i \delt}.
	\end{aligned}
\end{equation*}
Because we observe the state $X(t)$ at each sampling time $t \in \SamplingTimes,$ we have only two possible initial conditions for each $i:$ $\bar{x}_i^E(t) = 1$ and $\bar{x}_i^I(t) = 0$ or $\bar{x}_i^E(t) = 0$ and $\bar{x}_i^I(t) = 1.$

In the first case, we need to verify that there exists some $r > 0$ and $\delt$ which satisfy the decay constraint
\begin{equation}
	\frac{\eta_i}{\eta_i - \delta_i} \me^{- \delta_i \delt}  - \frac{\delta_i}{\eta_i - \delta_i} \me^{- \eta_i \delt} \leq \me^{-r \delt}.
\end{equation}
By assuming $\eta_i > \delta_i > r$ rearranging terms, approximating the resulting inequality, and taking logarithms, we get
\begin{equation}
	\frac{\ln(\eta_i) - \ln(\eta_i - \delta_i)}{\delta_i - r} \leq \delt.
\end{equation}
Similarly, by assuming $\delta_i > \eta_i > r,$ we get the inequality
\begin{equation}
	\frac{\ln(\delta_i) - \ln(\delta_i - \eta_i)}{\eta_i - r} \leq \delt.
\end{equation}
Considering both inequalities together verifies that the inequality stated by the theorem's hypothesis, i.e.
\begin{equation}
	\label{ineq:rate_time}
	\frac{\ln(\max\{\eta_i,\delta_i\}) - \ln(|\eta_i - \delta_i|)}{\min\{\eta_i,\delta_i\} - r} \leq \delt
\end{equation}
suffices to demonstrate that when \eqref{eq:EI_apx} is initialized with  $\bar{x}_i^E(t) = 0$ and $\bar{x}_i^I(t) = 1,$ we satisfy the desired exponential decay inequality.  In the case where  $\bar{x}_i^E(t) = 1$ and $\bar{x}_i^I(t) = 0,$ we have that the exponential decay inequality is satisfied for any $t,$ and all $r < \eta_i,$ which is implied by the inequality claimed by the hypothesis.

Recollecting our argument, we see that for $r$ and $\delt$ satisfying \eqref{ineq:rate_time}, we have the decay inequality 
\begin{equation*}
	\bar{x}_i^E(\delt|t) + \bar{x}_i^{I} (\delt|t) \leq (\bar{x}_i^{E}(t) + \bar{x}_i^{I}(t)) \me^{-r \delt}.
\end{equation*}
By summing over all of the nodes in the network, we get
\begin{equation*}
	\begin{aligned}
	\sum_{i = 1}^n \bar{x}_i^{E}(\delt|t) + \bar{x}_i^{I}(\delt|t) \leq \left(\sum_{i = 1}^{n} \bar{x}_i^{I}(t) + \bar{x}_i^{E}(t) \right) \me^{- r \delt}
	\end{aligned}
\end{equation*}
which since $\InfectExposed(X(t)) =  \left(\sum_{i = 1}^{n} \bar{x}_i^{I}(t) + \bar{x}_i^{E}(t) \right)$ and $\E[\InfectExposed(X(t+\delt)) | X(t)] \leq \left(\sum_{i = 1}^{n} \bar{x}_i^{I}(\delt|t) + \bar{x}_i^{E}(\delt|t) \right)$ hold, together imply
\begin{equation*}
	\E_{\utotal}[\InfectExposed(X(t+\delt)) | X(t)] \leq \InfectExposed(X(t))\me^{- r \delt}
\end{equation*}
which was sought.  This concludes the proof. \proofend

\bibliography{library}
\bibliographystyle{ieeetr}
\end{document}